\theoremstyle{definition}
\newtheorem{definition}{Definition}[section]
\newtheorem{remark}[definition]{Remark}
\newtheorem{example}[definition]{Example}
\newtheorem{algorithm}[definition]{Algorithm}
\newtheorem{assumption}[definition]{Assumption}
\theoremstyle{plain}
\newtheorem{lemma}[definition]{Lemma}
\newtheorem{proposition}[definition]{Proposition}
\newtheorem{theorem}[definition]{Theorem}
\newtheorem{corollary}[definition]{Corollary}
\newtheorem{problem}[definition]{Problem}
\def\RR{{\mathbb R}}
\def\I{{\mathcal I}}
\def\B{{\mathcal B}}
\def\C{{\mathbb C}}
\def\N{{\mathbb N}}
\def\Q{{\mathbb Q}}
\def\R{{\mathbb R}}
\def\Ball{{\rm Ball}}
\def\ff{{ f}}
\def\rank{{\rm rank}}
\def\Tr{{\rm Tr}}
\begin{document}

\begin{frontmatter}

\title{Certified Hermite Matrices from Approximate Roots} 
%

\author{Tulay Ayyildiz Akoglu\fnref{Tulayfootnote}}
\address{Karadeniz Technical University, Trabzon, Turkey}
\fntext[Tulayfootnote]{The research of Tulay Ayyildiz Akoglu is partially supported by TUBITAK grant 119F211.}
\ead{tulayaa@ktu.edu.tr}

\author{Agnes Szanto\fnref{Agnesfootnote}} 
\address{North Carolina State University, Raleigh, NC, USA}
\ead{aszanto@ncsu.edu } \fntext[Agnesfootnote]{The research of Agnes Szanto is partially supported by NSF grant  CCF-1813340.} 
%
\begin{abstract}
	
Let $\mathcal{I}=\langle f_1, \ldots, f_m\rangle\subset  \mathbb{Q}[x_1,\dots,x_n]$ be a zero dimensional radical ideal defined by polynomials given with exact rational coefficients. Assume that we are given approximations $\{z_1, \ldots, z_k\}\subset \mathbb{C}^n$ for the common roots $\{\xi_1, \ldots, \xi_k\}=V(\mathcal{I})\subseteq\C^n$. 
In this paper we show how to construct and certify the rational entries of Hermite matrices for $\mathcal{I}$ from the approximate roots $\{z_1, \ldots, z_k\}$. When $\mathcal{I}$ is non-radical, we give methods to construct and certify Hermite matrices for  $\sqrt{\mathcal{I}}$ from approximate roots.  Furthermore, we use signatures of these Hermite matrices to give rational  certificates of non-negativity of a given polynomial over a (possibly positive dimensional) real variety, as well as  certificates  that there is a real root within an $\varepsilon$ distance from a given point $z\in \mathbb{Q}^n$. 
\end{abstract}



\begin{keyword}{Symbolic--Numeric Computation $\cdot$ Polynomial Systems  $\cdot$ Approximate Roots $\cdot$ Hermite Matrices $\cdot$ Certification}
\end{keyword}
\end{frontmatter}
\section{Introduction}

The development of numerical and symbolic techniques to solve systems of polynomial equations resulted in an explosion of applicability, both in terms of the size of the systems efficiently solvable and the reliability of the output.  Nonetheless, many of the results produced by numerical methods are not certified. In this paper, we show how to compute exact Hermite matrices from  approximate roots of polynomials, and how to certify that these Hermite matrices  are correct.

Hermite matrices and Hermite bilinear forms were introduced by Hermite in 1850 \cite{Hermite1850} for univariate polynomials and were extended to the multivariate zero-dimensional setting in \cite{BPR2006, Sti1890}. Hermite matrices  have many applications, including 
counting real roots  \cite{,BPR2006, Hermite1853,Hermite1856} and locating them \cite{AA2016}.  Assume that we are given the  ideal $\mathcal{I}:=\langle f_1, \ldots, f_m\rangle\subset \Q[x_1, \ldots, x_n]$ generated by rational polynomials, and assume that $\dim_\Q \Q[x_1, \ldots, x_n]/\I=k<\infty$. Hermite matrices  have two kinds of definitions (see the precise formulation in Section \ref{subsect:HermiteMatrices} ):
\begin{enumerate}
\item The first definition of Hermite matrices uses the traces of $k^2$ multiplication matrices, each of them of size $k\times k$. The advantage of this definition is that it can be computed exactly, working with rational numbers only. The disadvantage is that it requires the computation of the traces of $k^2$ multiplication matrices. 
\item The second definition uses symmetric functions of the  $k$ common roots of $\I$, counted with multiplicity. 
The advantage of this definition is that it gives a very efficient way to evaluate the entries of the Hermite matrix, assuming that we know the common roots of $\I$ exactly. The disadvantage is that we need to compute the common roots exactly, which may involve working in field extensions of $\Q$. 
\end{enumerate}

In this paper we propose to use the second definition to compute Hermite matrices, but instead of using exact roots, we use approximate roots that can be computed with numerical methods efficiently \cite{HSW2013}.  Once we obtain an approximate Hermite matrix, we use rational number reconstruction  to construct a matrix with rational entries of bounded denominators. Finally, we give a symbolic method which certifies that the rational Hermite matrix we computed is in fact the correct one, corresponding to the exact roots of $\I$.


 The novelty of this work and the difficulty of this problem is  to certify the correctness of the Hermite matrix that we computed with the above heuristic approach. This part of the algorithm is purely symbolic. The main idea is that we use the relationship between multiplication matrices and Hermite matrices to compute a system of multiplication matrices from Hermite matrices and vice versa.   As 
  multiplication matrices, which are also rational matrices,   act as roots of the polynomial system,  we can certify their correctness, which in turn gives us a method to certify Hermite matrices. 
  
  Note that  both of the above definitions of Hermite matrices are continuous in the presence of root multiplicities. However,  our use of approximate roots and multiplication matrices necessitate that we first consider the case when $\I$ is radical. To handle the non-radical case, we use the fact that the maximal non-singular submatrix of the Hermite matrix of $\I$ gives the Hermite matrix of $\sqrt{I}$, so this is what we compute and certify. In both the radical and non-radical cases we were able to give sufficient conditions in terms of the quality of the root approximations and the  size of the rational numbers in the Hermite matrices  that guarantee that our Hermite matrix construction and certification algorithms do not fail.

  Another contribution of this paper is the presentation of 
  two novel applications using the signature of the certified Hermite matrices. The first application is to give a rational certificate that a polynomial  $g\in \Q[x_1, \ldots, x_n]$ is non-negative over a smooth real variety $V(f_1, \ldots, f_s)\cap \R^n$ where $f_1, \ldots, f_s\in \Q[x_1, \ldots, x_n].$ This  application was inspired by \cite{CifuPar2017} where the authors give a method to compute a degree $d$ sum of squares (SOS) decomposition (if one exists) for a non-negative polynomial over a real algebraic set using a finite set of sample points from the (complex) algebraic set.  While the SOS decomposition constructed in \cite{CifuPar2017} is approximate, the construction we give here is exact, using  Hermite matrices with rational entries.

  The second application is to give a rational certificate that for a given point $z\in \Q^n$, $\varepsilon\in \Q_+$
  and $\I:=\langle f_1, \ldots, f_m\rangle\subset  \mathbb{Q}[x_1,\dots,x_n]$  zero dimensional, there exists  $\xi\in V(f_1, \ldots, f_m) \cap\R^n$ such that 
 $$
\|z-\xi\|_2^2\leq \varepsilon.
 $$
  
 A natural question arises about the advantage of this hybrid symbolic--numeric approach over purely symbolic methods to compute Hermite matrices. The answer to this question is not black and white, it depends on the situation. For example, one could use a purely symbolic method computing a   Gr\"obner bases, and then  computing a system of multiplication matrices of the input polynomials and then the traces of certain multiplication matrices  give the entries of Hermite matrices symbolically (just as in the first definition above).  Instead, in this study, we use a symbolic-numeric approach as we assume that we have an efficient and parallelizable numerical method that can compute high precision approximations to all common complex roots for  {\em square subsystems} of $f$, i.e. for $n$ random linear combinations of $f_1, \ldots, f_m$. From these approximate roots we select in a  certified manner  a subset $z_1, \ldots, z_k\in \C^n$ (given with floating point numbers as coordinates) that are approximations solutions of $f\in \Q[x_1, \ldots, x_n]^m$.   The worst case arithmetic complexity  for both the purely symbolic and the hybrid methods are similar (asymptotically bounded by $D^n$ where $D$ is the maximum of the degrees of the input polynomials). In \cite{BFS2004,BFSY2005} the authors give some evidence that Gr\"obner basis techniques for highly overdetermined systems ($m>>n$) can be highly efficient.  The underlying idea behind this is that if one is given many polynomials already, there is only a little more work needed to generate a Gr\"obner basis. On the other hand, for systems that are square or close to square, the limited accuracy and parallelizability of the numerical approach allows to handle larger polynomial systems in practice than with purely symbolic approaches. In \cite{Batesetal2014} they compare the two  different approaches to computing and representing the solutions of polynomial systems: numerical homotopy continuation and symbolic computation.\footnote{We thank Jonathan Hauenstein for pointing out to us the subtleties of using symbolic vs. numeric methods for solving polynomial systems}
 
 This paper is a generalization of our paper \cite{AS2020}, where we considered only the univariate radical case.

  The paper is organized as follows. In the next section we introduce some preliminaries with fundamental definitions such as the Hermite matrices, rational number reconstruction and numerical computation of roots of overdetermined polynomial systems. In Section \ref{sect:ConstructHermite}, we explain how one can construct the exact Hermite matrix using the approximate solutions of the given polynomial system. In Section \ref{sect:CertificateHermite} we give an algorithm to certify that the obtained Hermite matrix is the exact one corresponding to our input polynomials. So far we assume that the ideal $\I$ is radical. In Section \ref{sect:Nonradical} we generalize the Hermite matrix computation and certification algorithm to the case when $\I$ is not radical. In Section \ref{sect:appl1} we present the application of Hermite matrices  to give a rational certificate that a given rational polynomial is non-negative over a real algebraic variety defined by rational polynomials. Finally, in Section \ref{sect:appl2} we give another application that is a rational certificate for the existence of an exact common root of a zero-dimensional ideal  within $\varepsilon$ distance from  a given point in $\Q^n$.

 

  

\section{Preliminaries}

\subsection{Hermite Matrices}\label{subsect:HermiteMatrices}


In this section we give two definitions for the Hermite matrix of a zero-dimensional ideal. The first one defines the matrix  from the common roots of the ideal, and in fact can be used to define Hermite matrices from any set of points. The second definition uses the traces of  multiplication matrices of the factor ring of the ideal, and thus it gives a definition where the entries of the Hermite matrices are rational functions of the coefficients of the polynomial system.

Everything in this section is valid for polynomials over $\R$, so while in the rest of the paper we assume  that our input polynomials  are rational, in this section we present the preliminaries over $\R$. We use the following notation. 	Let $f=(f_1, \ldots, f_m) \in \mathbb{R}[x_1,\dots,x_n]^m$ with $\mathcal{I}=\langle f_1, \ldots, f_m \rangle\subset \RR[x_1,\dots,x_n]$ a  zero-dimensional ideal and  $\mathcal{B}=\{x^{\alpha_1},\dots, x^{\alpha_k} \}$ be a monomial basis for $\mathbb{R}[x_1,\dots,x_n]/\mathcal{I}$.

Let  $\xi_1,\xi_2, \dots, \xi_k\in \C^n$ be the common roots of $\I$, here each root listed as many times as their multiplicity. If $\I$ is radical (which we assume in most of the later sections), each root is distinct. We denote by $z_1, \ldots, z_k\in \C^n$ approximations to the exact roots $\xi_1,\xi_2, \dots, \xi_k$.

The first definition of Hermite matrices is given for any multiset of points in $\C^n$ as  follows:

\begin{definition}\label{def:mult_hermite}
	 Let $g\in \R[x_1, \ldots, x_n]$ and $\mathcal{B}=\{x^{\alpha_1},\dots, x^{\alpha_k} \}$ 
	 be a set of monomials in  $\mathbb{R}[x_1,\dots,x_n]$. Let  
	$z_1,z_2, \dots, z_k\in \C^n$ be points, not necessary distinct.   Then the {\bf Hermite matrix} of $z_1,z_2, \dots, z_k$ with respect to $g$, written in the basis $\B$  is 
	\begin{equation}\label{eqn:Hermite_mtxform}
H_g^\B(z_1,z_2, \dots, z_k):=V^T G V
	\end{equation}
	\noindent
	where 
	$$
	V:=V_\B(z_1,z_2, \dots, z_k)=[z_i^{\alpha_j}]_{i,j=1,\ldots,k}
	$$
	is the Vandermonde matrix of $z_1,z_2, \dots, z_k \in \mathbb{C}^n$ with respect to a monomial set $\mathcal{B}$ and $G$ is a $k\times k$ diagonal matrix with $[G]_{i,i}=g(z_i)$ for $i=1,\dots,k$. We may omit $\B$ from the notation if it is clear from the context.
\end{definition}

\begin{example}When $\B=\{1, x_1, \ldots, x_1^{k-1}\}$ and $g=g(x_1)$ is also univariate, then the Hermite matrices are the same as in the univariate case.  If $z_l=(z_{l,1},\ldots, z_{l,n})\in \C^n$ for $l=1, \ldots, k$ then the entries of the $k\times k$ univariate Hermite matrices are defined by 
$$
[H^\B_g(z_1,\ldots,z_k)] _{i,j}= \sum_{l=1}^{k}g(z_{l,1})z_{l,1}^{i+j-2}.
$$
In particular, for  $g(x)=1$,  if $z_{1,1}, \ldots, z_{k,1}$ all distinct, the entries of the Hermite matrix are the power sum elementary symmetric functions of the first coordinates: 
 	\begin{equation}\label{eq:H1f}
	[H^\B_1(z_1,\ldots,z_k)] _{i,j}= \sum_{l=1}^{k}z_{l,1}^{i+j-2}.
	\end{equation} 
\end{example}

The second definition of Hermite matrices implies that the the entries of the Hermite matrix are rational functions of the coefficients of the defining polynomials of $\I$.
	
	\begin{definition}\label{def:hermite_trace}
	Let $\I\subset\RR[x_1, \ldots, x_n]$ be a zero dimensional ideal  and denote $A:=\RR[x_1, \ldots, x_n]/I$, a finite dimensional vectors space over $\RR$ with $k:=\dim_\RR A$. For any $f\in A$ let  $\mu_f:A\rightarrow A, $ $ p+\I\mapsto p\cdot f +\I$ 
	be the multiplication map by $f$ on $A$. Fix a monomial basis $\B=\{x^{\alpha_1},\dots, x^{\alpha_k} \}$ of $A$, and denote by $M^\B_f$ the $k\times k$ matrix of $\mu_f$ in the basis $\B$. The {\bf Hermite matrix} of $\I$ with respect to $g$, written in the basis $\B$  is 
	$$ H_g^\B(\I)=\left[ \Tr(\mu_{g\cdot x^{\alpha_i+\alpha_j}})\right]_{i,j=1}^{k}=\left[ \Tr(M^\B_{g\cdot x^{\alpha_i+\alpha_j}})\right]_{i,j=1}^{k},
	$$
	where $\Tr$ denotes both the trace of a linear transformation and  the trace of a matrix (note that the trace of a linear transformation is the trace of its matrix in \emph{any} basis).  We may omit $\B$ from the notation when it is clear from the context.
	\end{definition}

The next theorem asserts that the two definitions give the same matrix if we take the exact  common roots of a zero dimensional polynomial system. 

\begin{theorem}[\cite{BPR2006}] Let $\I\subset\RR[x_1, \ldots, x_n]$ be a zero dimensional ideal and $g\in \RR[x_1, \ldots, x_n]$. Let  $k:=\dim \RR[x_1, \ldots, x_n]/ \I$ and $\B=\{x^{\alpha_1}, \ldots, x^{\alpha_k}\}$ be a monomial basis for $\RR[x_1, \ldots, x_n]/ \I $.
Let $\xi_1, \ldots, \xi_k\in \C^n$ be the roots of $\I$, each root listed as many times as their multiplicity.  Then 
$$
H_g^\B(\xi_1, \ldots, \xi_k)=H_g^\B(\I).
$$
\end{theorem}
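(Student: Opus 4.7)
The plan is to reduce both sides of the claimed equality to the explicit sum
$\sum_{l=1}^k g(\xi_l)\,\xi_l^{\alpha_i+\alpha_j}$ for each entry. The key ingredient
is the classical trace formula: for any $h\in\RR[x_1,\ldots,x_n]$ and any
zero-dimensional ideal $\I$ with roots $\xi_1,\ldots,\xi_k\in\C^n$ listed with
multiplicity,
$$
\Tr(\mu_h) \;=\; \sum_{l=1}^{k} h(\xi_l).
$$
Granted this formula, applied to $h = g\cdot x^{\alpha_i+\alpha_j}$, the $(i,j)$
entry of $H_g^\B(\I)$ equals
$\sum_{l=1}^{k} g(\xi_l)\,\xi_l^{\alpha_i}\xi_l^{\alpha_j}$, which is exactly
$(V^T G V)_{i,j}=H_g^\B(\xi_1,\ldots,\xi_k)_{i,j}$ by Definition~\ref{def:mult_hermite}.
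So the whole proof reduces to establishing the trace formula and checking that it is
invariant under the base-change $\RR\to\C$ (since the traces of the multiplication
maps on $A=\RR[x]/\I$ and on $A_\C:=\C[x]/(\I\otimes\C)$ coincide: the matrix
$M_h^\B$ is the same).

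To prove the trace formula, I would work over $\C$ and use the primary
decomposition $A_\C\cong \bigoplus_{i=1}^{s} A_i$, where $A_i$ is the local ring at
the distinct root $\xi_i^*$ and $\dim_\C A_i = m_i$ is the multiplicity, with
$\sum m_i = k$. The operator $\mu_h$ preserves this decomposition, so
$\Tr(\mu_h)=\sum_i \Tr(\mu_h|_{A_i})$. On each $A_i$, write
$\mu_h = h(\xi_i^*)\cdot\mathrm{Id} + \mu_{h-h(\xi_i^*)}$; since
$h-h(\xi_i^*)$ lies in the maximal ideal of $A_i$, the second summand is nilpotent,
hence has trace zero. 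Therefore $\Tr(\mu_h|_{A_i}) = m_i\,h(\xi_i^*)$, and summing
over $i$ (recounting each $\xi_i^*$ a total of $m_i$ times as $\xi_l$) yields
$\sum_{l=1}^k h(\xi_l)$, as required.

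The main technical point is the non-radical case: one must verify that the local
summand $A_i$ has $\C$-dimension equal to the multiplicity $m_i$ of $\xi_i^*$ (this
is standard from the theory of zero-dimensional ideals, e.g.\ \cite{BPR2006}) and
that the nilpotency argument applies, i.e., every element of the maximal ideal of
the local Artinian ring $A_i$ acts nilpotently. In the radical case the argument
collapses to the familiar one: the Chinese Remainder Theorem gives
$A_\C\cong\C^k$, and in the product basis $\mu_h$ is the diagonal matrix
$\mathrm{diag}(h(\xi_1),\ldots,h(\xi_k))$, making the trace formula immediate. With
the trace formula in hand, the identity $H_g^\B(\xi_1,\ldots,\xi_k)=H_g^\B(\I)$
follows by a direct entry-wise comparison.
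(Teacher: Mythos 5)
Your argument is correct: the entrywise identity $\left[H_g^\B(\I)\right]_{i,j}=\Tr(\mu_{g\,x^{\alpha_i+\alpha_j}})=\sum_{l}g(\xi_l)\xi_l^{\alpha_i}\xi_l^{\alpha_j}=\left[V^TGV\right]_{i,j}$ follows exactly as you say from the trace formula $\Tr(\mu_h)=\sum_l h(\xi_l)$, and your proof of that formula (base change to $\C$, primary decomposition into local Artinian summands $A_i$ with $\dim_\C A_i=m_i$, and nilpotency of multiplication by elements of the maximal ideal) is the standard Stickelberger-type argument. The paper itself gives no proof of this statement --- it is quoted from \cite{BPR2006} --- and your write-up is essentially the proof one finds there, so there is nothing to flag.
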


\begin{remark}
Note that if $f_1, \ldots, f_m ,g \in \Q[x_1, \ldots, x_n]$ then for $\I=\langle f_1, \ldots, f_m\rangle$ we have  $H_g(\I)\in \Q^{k\times k}$, thus $H_g(\xi_1, \ldots, \xi_k)\in \Q^{k\times k}$, even if the roots are not rational.
\end{remark}

Next we present the classical Hermite  theorem  that uses the {\em signature} of Hermite matrices to count real roots of a real polynomial system. First we define the signature of a matrix. 

\begin{definition}\label{def:signature}
	Let $A$ be a real and symmetric matrix. Then the {\bf signature} of A is
	$$\sigma(A):=\# \{{\rm positive ~eigenvalues~of } A\}-\# \{{\rm negative ~eigenvalues ~of } A\}$$
\end{definition}

\noindent
Definition \ref{def:hermite_trace} implies that Hermite Matrices of real polynomial ideals are real and symmetric. The classical univariate Hermite Theorem \cite{Hermite1856}  was generalized to the multivariate case by Pedersen, Roy and Szpirglas  \cite{PRS1993}, and was also proved in \cite{BPR2006} and \cite{CLO2006}:

\begin{theorem}[Multivariate Hermite Theorem]\label{thm:mult_herm}
	Let $\I\subset \RR[x_1,\dots,x_n]$ be  zero dimensional and $\B$ be a monomial basis of  $\RR[x_1,\dots,x_n]/\I$. If 
	$H_g(\I)$ is the Hermite matrix of $~\I$ with respect to $g$ in the basis $\B$, then
$$\sigma(H_g(\I)) = \#\{x \in V_\RR(\I)~|~ g(x)>0\}-\# \{ x \in V_\RR(\I)~|~ g(x)<0\}.
	$$
\end{theorem}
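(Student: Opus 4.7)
The plan is to work with the Vandermonde formulation of $H_g(\I)$ from Definition \ref{def:mult_hermite}, which by the preceding theorem coincides with the trace-based definition. I would first handle the radical case via a real congruence, then extend to the non-radical setting by a local decomposition of $\R[x_1,\ldots,x_n]/\I$.

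\emph{Radical case.} The $k$ roots $\xi_1,\ldots,\xi_k$ are distinct, so $V := V_\B(\xi_1,\ldots,\xi_k)$ is invertible over $\C$ and $H_g(\I) = V^T G V$ with $G = \mathrm{diag}(g(\xi_i))$. Since $V$ is generally not real, Sylvester's law of inertia does not apply directly; to produce a real congruence I would pair complex-conjugate roots $\xi_j,\bar\xi_j$ and write the corresponding rows of $V$ as $v_j = u_j + i w_j$ and $\bar v_j = u_j - i w_j$. Replacing each such pair by the real rows $u_j,w_j$ gives a real invertible matrix $\tilde V = T V$ for an explicit complex block-diagonal $T$, so $H_g(\I) = \tilde V^T \tilde G \tilde V$ where $\tilde G := T^{-T} G T^{-1}$ is real and block-diagonal: a $1\times 1$ block $[g(\xi_j)]$ for each real root, and a $2\times 2$ block proportional to $\bigl(\begin{smallmatrix} p & -q \\ -q & -p \end{smallmatrix}\bigr)$ for each conjugate pair, where $g(\xi_j) = p + iq$. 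The conjugate-pair blocks have determinant $-(p^2+q^2)\le 0$ and therefore signature $0$; the real-root blocks contribute $\mathrm{sign}(g(\xi_j))$. Summing via Sylvester's law yields $\sigma(H_g(\I)) = \sum_{\xi\in V_\R(\I)} \mathrm{sign}(g(\xi))$, which is the claimed count.

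\emph{Non-radical case.} Here I would exploit the canonical decomposition $A\otimes_\R\C \cong \bigoplus_\xi A_\xi$ of the complexification of $A := \R[x_1,\ldots,x_n]/\I$ into local Artinian rings indexed by complex roots. Both $\mu_g$ and the trace respect this decomposition, so over $\R$ the Hermite form splits as a direct sum of blocks: one per real root $\xi$ acting on $A_\xi$, and one combined block per conjugate pair acting on the real form of $A_\xi \oplus A_{\bar\xi}$. By the same $2\times 2$ computation as in the radical case, each combined complex-conjugate block has signature $0$. For a real root $\xi$ of local multiplicity $m$, the block is the weighted local trace form $(a,b)\mapsto \Tr(\mu_{gab})$ on the local Artinian $\R$-algebra $A_\xi$; a direct analysis via the socle of $A_\xi$ shows this block has rank one and contributes exactly $\mathrm{sign}(g(\xi))$ when $g$ is a unit at $\xi$, and $0$ otherwise, independently of $m$.

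The main obstacle is this last local contribution: one must show that the $m$-dimensional local trace form contributes signature $\mathrm{sign}(g(\xi))$ rather than something scaling with $m$. A direct extension of the Vandermonde argument fails because repeated roots make $V$ singular, and a deformation to a nearby radical ideal is not admissible since the signature of $H_g$ is not continuous under such perturbations (an $m$-fold real root can split into varying numbers of real roots and complex conjugate pairs). The cleanest route is the direct local computation sketched above, which bypasses these difficulties by isolating the nonzero signature on the one-dimensional socle component of $A_\xi$, whose sign is inherited from $g(\xi)$.
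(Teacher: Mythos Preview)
The paper does not prove this theorem; it is quoted as a known result due to Pedersen--Roy--Szpirglas \cite{PRS1993}, with proofs referenced in \cite{BPR2006} and \cite{CLO2006}, so there is no in-paper argument to compare against. Your outline follows the standard approach found in those references: in the radical case one produces a real congruence by pairing conjugate rows of the Vandermonde matrix, and each conjugate-pair block has negative determinant and hence signature zero, while each real-root block contributes $\mathrm{sign}(g(\xi))$.

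In the non-radical case your conclusion is right but the mechanism you name is not quite. On the local factor $A_\xi$ one has $\Tr_{A_\xi}(\mu_h)=m\cdot h(\xi)$, since the only eigenvalue of $\mu_h$ on $A_\xi$ is $h(\xi)$ with algebraic multiplicity $m=\dim A_\xi$. Hence the local Hermite form is $(a,b)\mapsto m\,g(\xi)\,a(\xi)\,b(\xi)$; its radical is the maximal ideal $\mathfrak m_\xi$, and it descends to the one-dimensional residue field $A_\xi/\mathfrak m_\xi\cong\R$, where its sign is $\mathrm{sign}(g(\xi))$. This is a residue-field computation, not a socle computation: the socle of $A_\xi$ (the annihilator of $\mathfrak m_\xi$) plays no role here and need not be one-dimensional unless $A_\xi$ happens to be Gorenstein. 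The same reasoning shows that for a conjugate pair the local form factors through the two-dimensional residue field $\C$, where your $2\times2$ matrix (now scaled by $m_\xi$) applies verbatim and gives signature zero. With that terminological fix your plan is complete and in line with the cited proofs.
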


\begin{remark}\label{rmk:signature_compt}
 There are several ways to obtain the signature of a $k \times k$ real and symmetric matrix $M$ without computing the eigenvalues explicitly. We will describe two of these methods here. Note that if $M$ is a rational symmetric matrix, then both of these methods can be computed with exact arithmetic over the rationals. 
 \begin{enumerate}
     \item{\em Using Descartes rule of signs:} 
     Let $p(x)$ be the characteristic polynomial of the given $k\times k$ real and symmetric matrix. Since all eigenvalues of real symmetric matrices are real their characteristic polynomials have only real roots. Then the Descartes Rule of Signs provides that $\sigma(M)$ is the difference between the number of sign variation of the coefficients of $p(x)$ and the number of sign variations of the coefficients of $p(-x)$ (see Proposition 8.4 in \cite{BPR2006}). The complexity of this computation is bounded by $\mathcal{O}(k^4).$
    
     \item{ \em Using $LU$ decomposition:}
     The LU decomposition of real symmetric matrices can be written as 
     $LDL^T$ where $L$ is a special lower triangular matrix with 1's on the diagonal entries and $D$ is a diagonal matrix with the entries $\{ u_{11},\ldots,u_{kk} \}$. These entries are the diagonal entries of the upper triangular matrix $U$ obtained from the LU decomposition.
     By Sylvester Law of inertia, the signature of $M$ is the difference between $\#\{u_{ii}~:~u_{ii}>0 \}$ and $\#\{u_{ii}~:~u_{ii}<0 \}$. The complexity of this computation only comes from the cost of the $LU$ decomposition which can be found via Gaussian Elimination. The complexity of Gaussian elimination is bounded by $\mathcal{O}(k^3).$ 
 \end{enumerate}
 
\end{remark}

 We close this subsection with some definitions that will be used later in this paper. First, in our certification algorithm we need the following property of $\B$ (see \cite{Mourrain1999}):

\begin{definition}\label{def:connect}
Let $\mathcal{B}\subset \mathbb{R}[x_1,\dots,x_n]$. We say that $\B$ (or ${\rm span}_\R(\B)$) is {\em connected to 1} if for all $b\in {\rm span}_\R(\B)$ there exists $b_1, \ldots, b_n\in \B$ such that 
$$b=\sum_{i=1}^n x_ib_i$$
and $\deg(b_i)< \deg(b)$ for $i=1, \ldots, n$.
\end{definition}

 \begin{definition}
  $\mathcal{B}\subset \mathbb{R}[x_1,\dots,x_n]$ be a finite set of monomials. The  {\em extension} of $\mathcal{B}$ is defined by
 	\begin{equation}\label{eqn:B_ext}
 	\mathcal{B}^+ := \mathcal{B} \cup \bigcup_{i=1}^n x_i \mathcal{B}=\left\{b,x_1b,\ldots,x_nb~|~b \in \mathcal{B}\right\}.
 	\end{equation} 
 	
 \end{definition}

\begin{definition}\label{def:ext_Hermite}
	Let $\B$ be a finite set of monomials and assume that $|\B^+|=l$. The {\bf extended Hermite matrix} associated to  points
	$~z_1,\ldots,z_k \in \mathbb{C}^n$ (not necessarily distinct) is
	\begin{equation}\label{eqn:Hermite_mtxform_ext}
	H^+_g:=H^{\B^+}_g(z_1,\ldots, z_k):=(V^+)^T G V^+\in \C^{l\times l}
	\end{equation}
	where $V^+=V_{\mathcal{B}^+}(z_1, \dots, z_k)\in \C^{k\times l}$ and G is the $k\times k$ diagonal matrix with $[G]_{j,j}=g(z_j)$ for $j=1,\dots,k$. 

\end{definition}

\subsection{Rational Number Reconstruction}\label{sect:RNR}

The continued fraction method for a real number $\alpha>0$ can be described as the computation of 
$$
\alpha=\gamma_1=\lfloor\gamma_1\rfloor+\frac{1}{\gamma_2}=\lfloor\gamma_1\rfloor+\frac{1}{\lfloor\gamma_2\rfloor+\frac{1}{\gamma_3}}=\cdots
$$
where $\gamma_1=\alpha$ and $\gamma_{i+1}=\frac{1}{\gamma_i-\lfloor\gamma_i\rfloor}$. We call the rational numbers 
$$
\lfloor\gamma_1\rfloor, \lfloor\gamma_1\rfloor+\frac{1}{\lfloor\gamma_2\rfloor}, \lfloor\gamma_1\rfloor+\frac{1}{\lfloor\gamma_2\rfloor+\frac{1}{\lfloor\gamma_3\rfloor}}, \ldots
$$
the {\em convergents} for $\alpha$. The following theorem (c.f. \cite[Corollary 6.3a]{schrijver1998}) gives bounds on the distance from $\alpha$ that guarantees uniqueness of a rational number with bounded denominator, and shows that if such rational number exists, it is a convergent for $\alpha$.  




\begin{theorem} \cite{schrijver1998}\label{ratrec} 
	There exists a polynomial time algorithm which, for a given rational number $\alpha$ and a natural number $B$ tests if there exists a pair of integers $(p,q)$ with $1\leq q\leq B$ and 
	$$
	\left|\alpha-\frac{p}{q}\right|<\frac{1}{2B^2},
	$$ 
	and if so, finds this unique rational number  $\frac{p}{q}$ as a convergent for $\alpha$. 
\end{theorem}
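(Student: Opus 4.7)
The plan is to split the statement into three sub-claims and handle each in turn: (i) uniqueness of the rational $p/q$ satisfying the stated bounds, (ii) the structural fact that any such $p/q$ must appear as a convergent of the continued fraction expansion of $\alpha$, and (iii) polynomial-time computability via the continued fraction algorithm truncated at the first convergent whose denominator exceeds $B$.

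For uniqueness, I would assume that two distinct pairs $(p,q)$ and $(p',q')$ both satisfy $1\le q,q'\le B$ and $|\alpha - p/q|, |\alpha - p'/q'| < 1/(2B^2)$. The triangle inequality then gives $|p/q - p'/q'| < 1/B^2$. On the other hand, distinctness forces $|pq' - p'q|\ge 1$, hence
\[
\left|\frac{p}{q}-\frac{p'}{q'}\right| = \frac{|pq' - p'q|}{qq'} \ge \frac{1}{qq'}\ge \frac{1}{B^2},
\]
a contradiction. So at most one such $p/q$ exists.

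For the structural step, I would invoke the classical theorem of Legendre: if $p/q$ is a rational in lowest terms with $|\alpha - p/q| < 1/(2q^2)$, then $p/q$ is a convergent of the (simple) continued fraction expansion of $\alpha$. Since we assume $q\le B$, the hypothesis $|\alpha - p/q| < 1/(2B^2)\le 1/(2q^2)$ is precisely what Legendre's criterion requires. Reducing $p/q$ to lowest terms does not change the value, so the candidate must appear in the finite list of convergents $p_j/q_j$ produced by the continued fraction recursion $\gamma_{i+1} = 1/(\gamma_i - \lfloor\gamma_i\rfloor)$. The main conceptual ingredient here — and in my view the key step whose invocation carries the weight of the theorem — is Legendre's criterion; once it is cited, the rest is bookkeeping.

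For the algorithm and its polynomial-time guarantee, the procedure is: iteratively compute the convergents $p_j/q_j$ of $\alpha$ via the standard recurrences, stopping at the first $j$ with $q_j > B$. By the classical lower bound $q_j \ge F_j$ (the $j$-th Fibonacci number), the number of iterations is $O(\log B)$. For each $j$ with $q_j\le B$, test whether $|\alpha - p_j/q_j| < 1/(2B^2)$ using exact rational arithmetic on $\alpha = a/b$; by uniqueness at most one convergent passes this test, and by Legendre's criterion if any $p/q$ satisfies the hypothesis it will pass. Since the bit sizes of the intermediate $p_j, q_j$ are bounded polynomially in the bit size of $\alpha$ and $\log B$ (standard bound on continued fraction coefficients), every arithmetic operation is polynomial, giving the claimed polynomial-time algorithm. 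The main obstacle, should any arise, would be carefully justifying the polynomial bit-size bound on the convergents, but this is a textbook fact about continued fractions.
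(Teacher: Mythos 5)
Your proof is correct and takes essentially the same route as the paper's source: the paper cites Schrijver (Corollary 6.3a), whose proof is exactly the continued-fraction/convergents argument you give, and the paper's own preamble to the theorem sets up precisely this convergent machinery. The uniqueness argument via $|pq'-p'q|\ge 1$, the appeal to Legendre's criterion (which applies since $q\le B$ gives $1/(2B^2)\le 1/(2q^2)$), and the Fibonacci bound on the number of convergents are all standard and correctly deployed.
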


Approximate solutions are floating point numbers which are obtained from numerical  computations. Using an absolute error bound  $E>0$ on  numerical computations, we can set the denominator bound $B$ such that Theorem \ref{ratrec} provide the unique rational approximation of $\alpha$ as follows:

\begin{corollary}\label{cor:denom_bound}
Given $\alpha\in \R$  and $E>0$ in $\R$ there is at most one rational number with its denominator bounded by $B:=\left\lceil (2E)^{-1/2}\right\rceil$ within the distance $E$ from $\alpha$.

\end{corollary}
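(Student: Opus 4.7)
The plan is to prove the corollary by contradiction, via a short and elementary argument that does not require invoking the full algorithmic content of Theorem \ref{ratrec}. I would suppose that two distinct rationals $p_1/q_1 \neq p_2/q_2$ with $1 \le q_1, q_2 \le B$ are both within distance $E$ from $\alpha$, and extract a contradiction with the choice $B = \lceil (2E)^{-1/2} \rceil$.

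The first step bounds the gap between the two candidate rationals from above via the triangle inequality:
\[
\left|\frac{p_1}{q_1} - \frac{p_2}{q_2}\right| \;\le\; \left|\alpha - \frac{p_1}{q_1}\right| + \left|\alpha - \frac{p_2}{q_2}\right| \;\le\; 2E.
\]
The second step bounds the same quantity from below, using that $p_1 q_2 - p_2 q_1$ is a nonzero integer and $q_1 q_2 \le B^2$:
\[
\left|\frac{p_1}{q_1} - \frac{p_2}{q_2}\right| \;=\; \frac{|p_1 q_2 - p_2 q_1|}{q_1 q_2} \;\ge\; \frac{1}{B^2}.
\]
Combining the two bounds yields $1/B^2 \le 2E$, i.e.\ $B^2 \ge 1/(2E)$. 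Finally, I would check that this is incompatible with the choice $B = \lceil (2E)^{-1/2} \rceil$, completing the contradiction and establishing uniqueness.

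The main (and only) obstacle is a bit of bookkeeping around the ceiling and the strict/non-strict interpretation of ``within distance $E$'': the argument produces a clean contradiction when the hypothesis is read as a strict inequality $|\alpha - p_i/q_i| < E$, in which case the derived bound $1/B^2 < 2E$ forces $B > (2E)^{-1/2}$, while $B = \lceil (2E)^{-1/2} \rceil$ is the smallest integer satisfying $B \ge (2E)^{-1/2}$. One may alternatively appeal directly to Theorem \ref{ratrec} with the same numerical check $1/(2B^2) \ge E$, which is the content of the choice of $B$. Either way the proof is essentially the one-line observation that distinct rationals with denominators at most $B$ are separated by at least $1/B^2$, packaged as a uniqueness statement for rational approximations of bounded denominator.
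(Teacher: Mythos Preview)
The paper gives no separate proof of this corollary; it is presented as immediate from Theorem~\ref{ratrec}. Your elementary route via the triangle inequality together with the lower bound $|p_1/q_1 - p_2/q_2|\ge 1/(q_1q_2)\ge 1/B^2$ is precisely the argument underlying the uniqueness assertion in that theorem, so in spirit you are doing the same thing.

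The gap is in your last step, and your own caveat does not repair it. From $1/B^2<2E$ you obtain $B>(2E)^{-1/2}$ and call this a contradiction with $B=\lceil(2E)^{-1/2}\rceil$; but when $(2E)^{-1/2}$ is not an integer the ceiling \emph{is} strictly larger, so nothing is contradicted. Your fallback of invoking Theorem~\ref{ratrec} via the check $1/(2B^2)\ge E$ needs $B\le(2E)^{-1/2}$, which again fails for the ceiling. In fact the corollary as printed is not literally true: take $E=1/10$, so $(2E)^{-1/2}=\sqrt{5}$ and $B=\lceil\sqrt{5}\rceil=3$; then $1/2$ and $1/3$ both lie strictly within distance $E$ of $\alpha=5/12$ and have denominators at most $B$. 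The intended bound is $B=\lfloor(2E)^{-1/2}\rfloor$ (so that $E\le 1/(2B^2)$), under which your argument, read with the strict inequality matching Theorem~\ref{ratrec}, closes cleanly.
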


\subsection{Numerical certification of non-roots of overdetermined systems}\label{sec:numroots}

In this subsection we summarize some numerical methods from the literature  to compute  approximations of a superset of the common roots of an overdetermined system of polynomials $f=(f_1, \ldots, f_m)\in \Q[x_1, \ldots, x_n]$ where $m>n$ and to give a method to certify that a given point $z$ is \emph{not} an approximate root of $f$. Here we need to assume that $\I=\langle f_1, \ldots, f_m\rangle$ is radical and zero dimensional. The difficulty lies in the fact that numerical methods, such as the homotopy continuation method \cite{HSW2013}, are designed to compute the common roots of square, well-conditioned polynomial systems. Here we describe the approach presented in \cite{Hauenstein-Sottile} to handle overdetermined systems.

  The main idea is that even though the consistency of an overdetermined system is a non-continuous property, the converse, inconsistency of an overdetermined system is a continuous property that can be certified with numerical methods. Similarly, while certifying that a point is approximating an exact root of an overdetermined system with exact rational coefficients cannot be certified with purely numerical methods, certifying that a point is {\em not} an approximate root can be done numerically. This allows us to eliminate in a certified manner roots of a square subsystem of $(f_1, \ldots, f_m)$ that are not roots of all polynomials and to give a certified  upper bound $k$ on the number of common roots of $(f_1, \ldots, f_m)$. With this upper bound we can guarantee that our certification algorithm of Hermite matrices in Section \ref{sect:CertificateHermite} is correct when it gives a certification (although it may also return ``fail''). 

More precisely, \cite{Hauenstein-Sottile} suggests to consider two square systems, each are random linear combinations of the polynomials $f_1, \ldots, f_m$: If $R_1, R_2: \C^m\rightarrow \C^n$ two linear  maps (represented by two random matrices), then for $R_i(f):=R_i\circ f$ we can assume that 
$$
V(f)=V(R_1(f))\cap V(R_2(f)),
$$ which property is satisfied unless $R_1$ and  $R_2$ are from  Zariski closed subsets of all linear transformations. Similarly, for $i=1,2$,  we can assume that the roots of $V(R_i(f))$ are finite and all distinct,  this property is also satisfied unless $R_i$ is from a Zariski closed  subset of all linear transformations (c.f. \cite[Section 3]{Hauenstein-Sottile}).
Using numerical homotopy continuation methods and $\alpha$-theory, one can compute and certify all approximate roots of both $R_1(f)$ and $R_2(f)$, see details on this part of the algorithm in  \cite[Section 2]{Hauenstein-Sottile}. 

The following idea is a slight modification of \cite[Section 3]{Hauenstein-Sottile} allowing to discard approximate roots of $R_1(f)$ and $R_2(f)$ that do not approximate roots in $V(f)$. First note that for any approximate root $z$ approximating an exact root $\xi\in V(R_i(f))$ for $i=1,2$,  we  can give upper bounds for  $\|{z}-{\xi}\|$ using two times the $\beta$ function defined in $\alpha$-theory (c.f. \cite[Ch 8, Theorem 2]{Blumetal1998}), even without knowing the exact root. Thus, for $i=1,2$ denote by 
$\tilde{V}(R_i(f))$ the set of pairs $(z,\varepsilon)\in \C^n\times \R_+$ where $z$ is one of the approximate roots computed for $R_i(f)$ and $\varepsilon$ is an upper bound of the distance of $z$ from the exact root it approximates. Fix $(z,\varepsilon)\in \tilde{V}(R_1(f))$ and define
$$
S_{(z,\varepsilon)}:=\left\{(z',\varepsilon')\in \tilde{V}(R_2(f))\;:\: \|z-z'\|\leq \varepsilon+\varepsilon'\right\}.
$$
If $|S_{(z,\varepsilon)}|>1$ then we need to refine $z$ and all $z'$ such that $(z',\varepsilon')\in S_{(z,\varepsilon)}$   using  Newton's method w.r.t $R_1(f)$ and $R_2(f)$ respectively, until one gets $S_{(z,\varepsilon)}=\emptyset$ or $|S_{(z,\varepsilon)}|=1$.  If $S_z=\emptyset$ then we can discard $(z,\varepsilon)$ since it cannot approximate an exact root in $V(f)=V(R_1(f))\cap V(R_2(f))$. If $S_z$ has one element $(z', \varepsilon')$, suppose $z$ approximates an exact root $\xi\in V(R_1(f))$, $z'$ approximates an exact root $\xi'\in V(R_2(f))$ but $\xi\neq \xi'$. Then we can compute refinements $z_k$ and $z'_k$ using $k$ iterations of Newton's method starting from $z$ and $z'$  w.r.t $R_1(f)$ and $R_2(f)$ respectively, such that 
\begin{equation}\label{eq:zdist}
 \|{z}_k-{ z'}_k\|> \varepsilon_k +\varepsilon'_k. 
\end{equation}
 where $\varepsilon_k\leq \frac{1}{2^{2^k-1}}\varepsilon$ is a bound on $\|z_k-\xi\|$, and $\varepsilon'_k$ is a bound for $\|z'_k-\xi'\|$. If we find a $k$ such that the inequality (\ref{eq:zdist}) is satisfied then we discard $(z,\varepsilon)$, otherwise we keep it. We repeat the above procedure for all elements in $\tilde{V}(R_1(f))\cup\tilde{V}(R_2(f))$.

Note that the above method never eliminates points  that were approximating roots in $V(f)$, but may leave in points that were not near $V(f)$. Thus, as a consequence, we can always guarantee that the input $z_1, \ldots, z_k$ for Algorithm \ref{alg:apprHerm} below, to compute an approximate Hermite matrix, is a {\em superset} of an approximation of $V(f)$, and in particular the above method gives a certification  that $k\geq \dim \Q[x_1, \ldots, x_n]/\I$. On the other hand, our main symbolic-numeric certification Algorithm  \ref{alg:cert} for Hermite matrices   will always fail when there are superfluous points among the input. Thus, using the assumption $k\geq \dim \Q[x_1, \ldots, x_n]/\I$,  the Hermite matrices that we certify successfully will correspond to all roots of $V(f)$ (see Theorem \ref{thm:maincert}).

In the rest of the paper we assume that we already computed a set $\{z_1, \ldots, z_k\}\subset \C^n$ that contain an approximate root for each root  in $V(f)$, i.e. if $V(f)=\{\xi_1, \ldots, \xi_{k'}\}$ then $k'\leq k$ and for $i=1, \ldots ,k'$ there exists $j_i\in \{1, \ldots, k\}$ such that the Newton iteration starting from $z_{j_i}$ quadratically converges to $\xi_i$. Moreover, using the $\beta$ function from $\alpha$-theory as above,  we assume that we have a certified bound $E\in \R_{+}$ that we call {\em accuracy}, such that
\begin{equation}\label{eq:E}
\left\|\xi_i-z_{j_i}\right\|_2\leq E \quad i=1, \ldots, k'.
\end{equation}


\section{Constructing Rational Hermite Matrices}\label{sect:ConstructHermite}


In this section we construct a rational matrix $H_1^+\in \Q^{l\times l}$ from points $z_1, \ldots, z_k$ given with limited precision, using the definition of Hermite matrices  in Defintion \ref{def:ext_Hermite} and rational number reconstruction. 

Let $z_1, \ldots, z_k\in \C^n$ with $z_{i}=(z_{i,1},\ldots,z_{i,n})$ for $i=1,\ldots,k$. 
Let $\B=\{x^{\alpha_1}, \ldots, x^{\alpha_k}\} $ be basis for 
$\RR[x_1,\ldots,x_n]/\I(z_1,\ldots,z_k)$, and we use $\B^+$ as described in (\ref{eqn:B_ext}). In the certification algorithm below we will assume that $\B$ is connected to 1 as in Definition \ref{def:connect}, but the algorithm of this section works for arbitrary $\B$. 
 Algorithm \ref{alg:apprHerm} below computes the  matrix $H_1^+$ from the Hermite matrix $H_1^{\B^+}(z_1,z_2,\ldots,z_k)$ with respect to $\B^+$ by applying rational number reconstruction.

As part of the input of Algorithm \ref{alg:apprHerm}, we also use  quantities  $E, M\in \RR_+$, where $E$  is  an  upper bound for the accuracy of each $z_i$ for $i=1, \ldots, k$ and $M$ is an upper bound for the absolute values of the coordinates of the exact common roots of $\I$. We assume that $E$ is computed as part of the numerical method computing $z_1, \ldots, z_k$, as described in Section \ref{sec:numroots}. In this section we use  $E$ and $M$ to estimate the denominator in the rational number reconstruction for each entry of $H_1^+$ using  Proposition \ref{prop:denom_bound} below.\\

\begin{algorithm}[Hermite Matrix Computation]\label{alg:apprHerm}
$\;$
\begin{description}
	\item[Input:] 
	 $\B=\{x^{\alpha_1}, \ldots, x^{\alpha_k} \}$ and $\B^+$ as in (\ref{eqn:B_ext}) with $|\B^+|=l$  for $k,l\in \N$.\\
	 $E, M\in\R_+$ and $z_1, \ldots, z_k\in \C^n$ such that $\|z_i\|_\infty \leq M-E$ for $i=1, \ldots, k$ and $E$ is as in (\ref{eq:E}). 
	 
	 \item[Output:] $H_1^+\in \Q^{l\times l}$ with rows and columns indexed by the elements of $\B^+$. 
\noindent
\item[~~~~~~1:] Compute the extended Hermite matrix $H_1^{\B^+}(z_1,z_2,\ldots,z_k)$ using Definition \ref{def:ext_Hermite} with respect to the auxiliary function $g=1$ and the monomials in $\mathcal{B}^+$.  

\item[~~~~~~2:] Rationalize each entry of  the approximate Hermite matrix $H_1^{\B^+}(z_1,z_2,\ldots,z_k)$ using rational number reconstruction as explained in Subsection \ref{sect:RNR}. 
For the $(i,j)$-th entry of the $H_1^{\B^+}(z_1,z_2,\ldots,z_k)$, we use the following denominator bound:
	\begin{equation}\label{eqn:denom_bound_mtx}
	B_{ij}:=\left\lceil (2Eknd_{i,j} M^{d_{i,j}-1})^{-1/2}\right\rceil,
	\end{equation}
where $d_{i,j}=\deg b_i+\deg b_j$ and $b_i$ and $ b_j$ are the $i$-th and $j$-th elements of $\B^+$ respectively, for $1\leq i,j\leq l$. (See Proposition \ref{prop:denom_bound} below for obtaining this bound.) Return the resulting rational matrix.
\end{description}
\end{algorithm}


We need the following proposition to get the bounds in (\ref{eqn:denom_bound_mtx}) for the denominators of the entries of the Hermite matrix. 

\begin{proposition}\label{prop:denom_bound}
Given $\I=\langle f_1,\ldots,f_m\rangle  \subset \Q[x_1, \ldots, x_n]$ zero dimensional radical ideal with $V(\I)=\{\xi_1, \ldots, \xi_k\}\subset\C^n.$ Suppose $E>0$ and $z_1, \ldots, z_k\in \C^n$ are such that 
$$
\|\xi_i-z_i\|_2< E.
$$ 
Let $M>0$ such that for all $i=1, \ldots, k$ and $y\in \Ball(z_i, E)=\{x\in \C^n: \|x-z_i\|_2<E\}$
$$
\|y\|_\infty \leq M .
$$
Let $\alpha\in \N^n$ with $d:=|\alpha|$. Then  we have   \begin{eqnarray}\label{eqn:errorbound}\left|\sum_{i=1}^k \xi_i^\alpha - \sum_{i=1}^k z_i^\alpha\right| \leq E k n d M^{d-1} .
\end{eqnarray}
Furthermore, there is at most one rational number within 
$E k n d M^{d-1}$ distance from $\sum_{i=1}^k z_i^\alpha$ with denominator bounded by 
$$    B:=\left\lceil (2Ek n d M^{d-1})^{-1/2}\right\rceil .   $$ 
\end{proposition}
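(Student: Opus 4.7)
The plan is to prove the inequality (\ref{eqn:errorbound}) by a first-order ``mean value''--type bound on each monomial $x^\alpha$ restricted to the line segment from $z_i$ to $\xi_i$, then sum over $i$; the second assertion will then be immediate from Corollary \ref{cor:denom_bound}.

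For the per-point estimate, I would parametrize the segment by $\gamma(t) = z_i + t(\xi_i - z_i)$ for $t\in[0,1]$ and write
\[
\xi_i^\alpha - z_i^\alpha \;=\; \int_0^1 \frac{d}{dt}\,\gamma(t)^\alpha \, dt \;=\; \int_0^1 \sum_{j=1}^n (\xi_{i,j}-z_{i,j})\,\partial_j x^\alpha\bigl(\gamma(t)\bigr)\, dt.
\]
Since $\|\xi_i - z_i\|_2 < E$, the segment lies inside $\Ball(z_i,E)$, so by hypothesis $\|\gamma(t)\|_\infty \le M$ for all $t\in[0,1]$. For each $j$, the partial derivative satisfies $|\partial_j x^\alpha(y)| = \alpha_j |y_j|^{\alpha_j-1}\prod_{l\ne j}|y_l|^{\alpha_l} \le d M^{d-1}$ on this segment (interpreting the bound as $0$ when $\alpha_j=0$). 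Combining this with $|\xi_{i,j}-z_{i,j}| \le \|\xi_i-z_i\|_\infty \le \|\xi_i-z_i\|_2 \le E$ gives
\[
|\xi_i^\alpha - z_i^\alpha| \;\le\; \sum_{j=1}^n E\cdot dM^{d-1} \;=\; n d M^{d-1} E.
\]
Summing over $i=1,\dots,k$ and applying the triangle inequality yields
\(
\bigl|\sum_{i}\xi_i^\alpha - \sum_{i} z_i^\alpha\bigr| \le k n d M^{d-1} E,
\)
which is exactly (\ref{eqn:errorbound}).

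For the uniqueness conclusion, note that because $\I$ is radical and rational, the power sum $\sum_i \xi_i^\alpha$ is a rational number (it equals the trace of $M^\B_{x^\alpha}$, hence lies in $\Q$). So (\ref{eqn:errorbound}) shows that this particular rational number lies within distance $\varepsilon := kndM^{d-1}E$ of the computable quantity $\sum_i z_i^\alpha$. Applying Corollary \ref{cor:denom_bound} with this $\varepsilon$ in place of the accuracy parameter gives the denominator bound $B = \bigl\lceil (2kndM^{d-1}E)^{-1/2}\bigr\rceil$ and uniqueness of a rational within distance $\varepsilon$ with denominator at most $B$, completing the proof.

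The main obstacle, if any, is purely bookkeeping: getting the factor $n d M^{d-1}$ tight requires being careful to use $\|\xi_i-z_i\|_\infty$ for the coordinate differences (so the $n$ factor comes from summing over $j$) while using $\|\xi_i-z_i\|_2\le E$ for the accuracy, and separately noticing that $\alpha_j \le d = |\alpha|$ gives the $d$ factor uniformly in $j$. No subtler analytic tool is needed since $x^\alpha$ is a polynomial, so the path integral of its derivative is an exact identity, not merely an inequality.
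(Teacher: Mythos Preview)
Your argument is correct and is essentially the same as the paper's: both apply a first-order Taylor/mean-value estimate along the segment from $z_i$ to $\xi_i$, bound each partial derivative by $dM^{d-1}$ using the hypothesis $\|y\|_\infty\le M$ on $\Ball(z_i,E)$, bound each coordinate difference by $E$, and sum over $j$ and $i$ to pick up the factors $n$ and $k$; the uniqueness claim then follows from Corollary~\ref{cor:denom_bound}. Your presentation is slightly more explicit (per-point path integral rather than a single multivariate Taylor expansion in all $nk$ coordinates), and your remark that $\sum_i\xi_i^\alpha\in\Q$ is a pleasant bonus, though the statement as phrased only asserts uniqueness, not existence.
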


\begin{proof}
Fix $\alpha\in \N^n$ with $d:=|\alpha|$. First note that $\sum_{i=1}^k z_i^\alpha$ is a polynomial in the $nk$ coordinates of $z_1, \ldots, z_k$ of degree $d$. Using a multivatiate version of Taylor's Theorem \cite{Apostol1969}, there exist  $R_{i,j}(z_1, \ldots, z_k)$ for $i=1, \ldots, k$ and $j=1, \ldots, n$  such that 
$$
\sum_{i=1}^k z_i^\alpha-\sum_{i=1}^k \xi_i^\alpha=\sum_{i=1}^k\sum_{j=1}^n R_{i,j}(z_1, \ldots, z_k)(z_{i,j}-\xi_{i,j}).
$$
Moreover, 
$$
|R_{i,j}(z_1, \ldots, z_k)|\leq \max_{s,t}\max_{y_s\in {\Ball}(z_s, E) }\left|\frac{\partial z_s^\alpha}{\partial z_{s,t}}(y_s)\right|\leq dM^{d-1}.
$$
Thus we get 
$$
\left|\sum_{i=1}^k z_i^\alpha-\sum_{i=1}^k \xi_i^\alpha\right|\leq \sum_{j=1}^n\sum_{i=1}^kdM^{d-1}\left| z_{i,j}-\xi_{i,j}\right|\leq kndM^{d-1}E.
$$
The second claim is  straightforward from Corollary \ref{cor:denom_bound} using $E'=E k n d M^{d-1}$. 
\end{proof}

The next theorem gives sufficient conditions for Algorithm \ref{alg:apprHerm}  to correctly  compute the exact Hermite matrices $H_1^+$ for $\I$ from the approximate points $z_1, \ldots, z_k$.

\begin{theorem} \label{thm:correct1} Let $\I=\langle f_1, \ldots, f_m\rangle  \subset \Q[x_1,\dots,x_n]$ be a zero dimensional radical ideal with $\B=\{x^{\alpha_1}, \ldots, x^{\alpha_k}\}$ a basis for $\Q[x_1,\dots,x_n]/\I.$ Denote $V_\C(\I)=\{\xi_1, \ldots, \xi_k\}\subset \C^n$. Let $E>0$ and $z_1, \ldots, z_{k}\in \C^n$ such that for each $i\in \{1, \ldots , k\}$ there exists a unique $j_i$ such that 
$$\|z_{j_i}-\xi_{i}\|_2<E.
$$
Assume further that  $\|z_{i}\|_\infty\leq M-E$ for all $i=1, \ldots, k$. Finally, assume that for  $x^\alpha=x_ix_jx^{\alpha_t} \,x^{\alpha_s}$  for $i,j=1, \ldots n$ and $t,s=1, \ldots, k$, the denominator of $\sum_{j=1}^k \xi_i^\alpha\in \Q$ is at most $\lceil (2E k n |\alpha| M^{|\alpha|-1})^{-1/2}\rceil$. Then Algorithm \ref{alg:apprHerm}  computes the exact Hermite matrices $H_1^{\B^+}(\I)$. 
\end{theorem}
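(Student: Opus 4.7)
The plan is to reduce the claim to an entrywise argument, since Algorithm \ref{alg:apprHerm} treats each position of the output matrix independently via rational number reconstruction.

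Fix an index pair $(i,j)\in\{1,\ldots,l\}^2$ and let $b_i,b_j\in\B^+$ be the corresponding basis elements. Unwinding Definition \ref{def:ext_Hermite} with $g=1$, the $(i,j)$ entry of $H_1^{\B^+}(z_1,\ldots,z_k)$ equals $\sum_{l=1}^k b_i(z_l)\,b_j(z_l)=\sum_{l=1}^k z_l^\alpha$ where $x^\alpha := b_i(x)\,b_j(x)$ is a single monomial. Every element of $\B^+$ has the form $x_p\,x^{\alpha_t}$ for some $p\in\{0,1,\ldots,n\}$ (with the convention $x_0:=1$) and some $\alpha_t\in\B$, so $x^\alpha$ has exactly the shape $x_p x_q x^{\alpha_t} x^{\alpha_s}$ appearing in the denominator hypothesis of the theorem. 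The corresponding entry of the exact target $H_1^{\B^+}(\I)$ is $\sum_{l=1}^k\xi_l^\alpha$, a rational number because $\I$ is defined over $\Q$.

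Next I would quantify the error between the approximate and exact entries. After reindexing so that $\|z_l-\xi_l\|_2<E$ for every $l$ (permissible because the symmetric sum $\sum z_l^\alpha$ is invariant under permutation of the summands), Proposition \ref{prop:denom_bound} yields
$$
\Bigl|\sum_{l=1}^k\xi_l^\alpha-\sum_{l=1}^k z_l^\alpha\Bigr|\;\leq\;E\,k\,n\,|\alpha|\,M^{|\alpha|-1}.
$$
Here $|\alpha|=\deg b_i+\deg b_j = d_{i,j}$, which is precisely the exponent appearing in the algorithm's denominator threshold $B_{ij}$ of (\ref{eqn:denom_bound_mtx}).

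To conclude, invoke Corollary \ref{cor:denom_bound} with accuracy $Ekn|\alpha|M^{|\alpha|-1}$: among all rationals within this distance of $\sum_{l=1}^k z_l^\alpha$, at most one has denominator bounded by $B_{ij}$. By the theorem's denominator hypothesis, $\sum_{l=1}^k\xi_l^\alpha$ has denominator at most $B_{ij}$, and by the previous inequality it lies within the required distance, so it is that unique rational and is therefore returned by the continued-fraction reconstruction in Step 2. Running over all $(i,j)$ gives the identity $H_1^+ = H_1^{\B^+}(\I)$. There is no deep obstacle here: the proof is essentially a bookkeeping alignment between Proposition \ref{prop:denom_bound} and Corollary \ref{cor:denom_bound}. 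The only points demanding care are (i) matching $|\alpha|$ with $d_{i,j}$ so that the algorithm's bound $B_{ij}$ coincides with the uniqueness bound in the corollary, and (ii) noting that every product $b_ib_j$ with $b_i,b_j\in\B^+$ is covered by the denominator hypothesis via the convention $x_0=1$.
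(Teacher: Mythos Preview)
Your proposal is correct and follows essentially the same route as the paper: both arguments work entrywise, apply Proposition~\ref{prop:denom_bound} to bound $\bigl|\sum_l\xi_l^\alpha-\sum_l z_l^\alpha\bigr|$, and then invoke the uniqueness in rational reconstruction (Corollary~\ref{cor:denom_bound}/Theorem~\ref{ratrec}) together with the denominator hypothesis to conclude that Step~2 returns $\sum_l\xi_l^\alpha$. Your treatment is in fact slightly more explicit than the paper's in two places---you spell out that $|\alpha|=d_{i,j}$ so that the algorithm's bound $B_{ij}$ matches the uniqueness threshold, and you introduce the convention $x_0=1$ to make the denominator hypothesis cover all products $b_ib_j$ with $b_i,b_j\in\B^+$---while the one small step you leave implicit (that $\|z_i\|_\infty\le M-E$ forces $\|y\|_\infty\le M$ for every $y\in\Ball(z_i,E)$, so Proposition~\ref{prop:denom_bound} applies) is stated in the paper's proof and is immediate from $\|\cdot\|_\infty\le\|\cdot\|_2$.
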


\begin{proof}
 Since   $\|z_i-\xi_{j_i}\|_2<E$ and $\|z_{i}\|_\infty\leq M-E$, we have that $\|\xi_{i}\|_\infty\leq M$, thus the assumptions  of Proposition \ref{prop:denom_bound} are satisfied. Therefore,  for all $\alpha\in \N^n$ as in the claim, we have that there is  at most one rational number within $E k n |\alpha| M^{|\alpha|-1}$ distance from $\sum_{i=1}^k z_i^\alpha$ with denominators bounded by $\lceil (2E k |\alpha| M^{|\alpha|-1})^{-1/2}\rceil$. By Proposition \ref{prop:denom_bound} we can see that  $\sum_{i=1}^k\xi_i^\alpha$ is within that distance from $\sum_{i=1}^k z_i^\alpha$, and   using our assumption on the denominator of 
 $\sum_{i=1}^k\xi_i^\alpha\in \Q$, by Theorem  \ref{ratrec} the rational number reconstruction algorithm finds $\sum_{i=1}^k\xi_i^\alpha$. Thus, the entries of $H_1^+$ computed by Algorithm \ref{alg:apprHerm} are the same as the entries of $H_1^{\B^+}(\I)$ as claimed.
\end{proof}

\begin{remark}
The assumption of Theorem \ref{thm:correct1} that the denominator of $\sum_{j=1}^k \xi_i^\alpha\in \Q$ is at most $\lceil (2E k n  |\alpha| M^{|\alpha|-1})^{-1/2}\rceil$ can be achieved by improving the accuracy $E$ of the approximate roots $z_1, \ldots, z_k$. If we assume that  $z_1, \ldots, z_k$  are all approximate roots for a square subsystem of $f$, we can use Newton iterations to quadratically converge to the exact roots, thus decrease $E$. Meanwhile, the other quantities in this bound ($k,n,|\alpha|, M$) are fixed, so with enough iterations we  increase $\lceil (2E k n  |\alpha| M^{|\alpha|-1})^{-1/2}\rceil$ to satisfy the condition of the Theorem. One could in theory study a priori bounds on how small $E$ has to be to satisfy this condition,  (we did a similar analysis in \cite{AHS2018}), but in this paper we let our Hermite matrix certification algorithm reject cases when the accuracy of the root approximation is not sufficiently good.   \end{remark}

\section{Certification of the Exact Hermite Matrix}\label{sect:CertificateHermite}

Let $f=(f_1, \ldots, f_m) \in \mathbb{Q}[x_1,\dots,x_n]^m$ be a rational polynomial system  with zero dimensional radical ideal $\mathcal{I}=\langle f_1, \ldots, f_m\rangle$. In the previous section we computed a matrix $H_1^+$ with rows and columns corresponding to $\B^+$, where $\B=\{x^{\alpha_1}, \ldots, x^{\alpha_k}\}$. In this section we certify if this matrix is the the extended Hermite matrix of $\I$ and we also compute $H_g(\I)$ for any polynomial $g\in \Q[x_1, \ldots, x_n]$, as long as $k\geq \dim\Q[x_1, \ldots, x_n]/\I$. Here we assume that $\B$ is connected to 1 as in Definition \ref{def:connect}. The following algorithm is purely symbolic:

\begin{algorithm}[Hermite Matrix Certification]\label{alg:cert}
$\;$
\begin{description}
	\item[Input:]
	 $\ff=(f_1, \ldots, f_m) \in \Q[x_1,\dots,x_n]^m$ with $\I=\langle f_1, \ldots, f_m\rangle$ zero dimensional and radical;\\
	 $g \in \Q[x_1,\ldots,x_n]$,\\
	 $\B=\{x^{\alpha_1}, \ldots, x^{\alpha_k}\}$ connected to 1  with $|\B^+|=l$  for some $k,l\in \N$\\
	 $H_1^+ \in \Q^{l \times l}$   with rows and columns indexed by the elements of $\B^+$. 
	
	\item[Output:] 
	The certified $H_1(\I)$ and $H_g(\I)$, or Fail.
	
	\item[~~~~~~1:] $H_1 \leftarrow k\times k$ submatrix of $H_1^+$ with rows and columns corresponding to $\B$.  \\
	\noindent $H_1^{x_s} \leftarrow$ $k\times k$ submatrix of $H_1^+$ with rows corresponding to $\B$ and columns corresponding to $x_s\B$ for $s=1, \ldots, n$.
	
	\item[~~~~~~2:]{\bf If}  
	$\rank~ H_1 = \rank~ H_1^+=k,$
	{\bf then} $M_s \leftarrow H_1^{-1}\cdot H_1^{x_s}$ for $s=1, \ldots, n$. {\bf else} return Fail.
	
	\item[~~~~~~3:] {\bf For} $s=1, \ldots,n$,  $i,j=1, \ldots, k$ \\
	 \hspace {2cm} {\bf if}  $x_sx^{\alpha_i}=x^{\alpha_j}$ {\bf and} $\left[M_s\right]_{i,*}\neq {\bf e}_j^T$ {\bf then return} Fail.
	 
	\item[~~~~~~4:]
	Let $c_1, \ldots, c_n$ be either new parameters or generic elements of $\Q$.\\
	$p(\lambda) \leftarrow$ characteristic polynomial polynomial to $\sum_{i=1}^nc_iM_i$.\\
	  {\bf  if} $\gcd(p(\lambda), p'(\lambda))\neq 1$ {\bf return} Fail.\\

	\item[~~~~~~5:] {\bf If} 
	$$ M_i\cdot M_j=M_j\cdot M_i \quad 1\leq i<j\leq n
	$$ and
	$$
	f_i(M_1,M_2,\ldots,M_n)=0 \hbox{ for } i=1,\ldots,m,
	$$
	 {\bf then} we certified that $M_i$ is the transpose of the multiplication matrix of $\I$ with respect to $x_i$ in the basis $\B$ for all $i=1,\ldots,n$. \\
	 {\bf Else return} Fail.
		
	\item[~~~~~~6:] {\bf For} $i,j=1, \ldots, l$  {\bf if} 
	$${\rm Tr}((b_i\cdot b_j)(M_1,M_2,\ldots,M_n))\neq \left[H_1\right]_{i,j}
	$$ 
	where  $b_i$ and $b_j$ are the $i$-th and $j$-th elements of $\B$ respectively, and  $(b_i\cdot b_j)(M_1,M_2,\ldots,M_n)$ is the matrix obtained by evalutaing the polynomial $b_i\cdot b_j$ in the matrices $M_1,M_2,\ldots,M_n$\\
	{\bf then return } Fail.\\
	{\bf Else} we certified  $H_1=H_1(\I)$.  
	
	\item[~~~~~~7:]  {\bf Return} $H_1$ and 
	$H_g \leftarrow H_1\cdot g(M_1,\ldots,M_n).$
	
	\end{description}
\end{algorithm}

We have the following result on the correctness of Algorithm \ref{alg:cert}.

\begin{theorem}\label{thm:maincert}
Let $f=(f_1, \ldots, f_m)\in \Q[x_1, \ldots, x_n]$ with $\I=\langle f_1, \ldots, f_m\rangle  \subset \Q[x_1,\dots,x_n]$,   
	 $g \in \Q[x_1,\ldots,x_n]$,
	 $\B=\{x^{\alpha_1}, \ldots, x^{\alpha_k}\}$ connected to 1  with $|\B^+|=l$  for some $k,l\in \N$. Let 
	 $H_1^+ \in \Q^{l \times l}$ be a matrix  with rows and columns indexed by the elements of $\B^+$. If $k\geq \dim\Q[x_1, \ldots, x_n]/\I$ and Algorithm \ref{alg:cert} does not return  Fail then $\I$ is radical, $\B$ is a basis for $\Q[x_1, \ldots, x_n]/\I$ and the output satisfies 
	 $$H_1=H_1^\B(\I) \; \text{ and } \;  H_g=H_g^\B(\I).$$
\end{theorem}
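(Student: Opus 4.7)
The plan is to show that if Algorithm~\ref{alg:cert} terminates without returning Fail, then $\I$ is radical and the matrices $M_1,\ldots,M_n$ computed in Step~2 coincide (up to the transpose convention implicit in the algorithm) with the multiplication matrices of $x_1,\ldots,x_n$ on $A:=\Q[x_1,\ldots,x_n]/\I$ in the basis $\B$. All four conclusions of the theorem then follow.

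First I would use Steps~4 and~5 to extract the underlying algebraic structure. The commutativity certified in Step~5, combined with the squarefree characteristic polynomial of $\sum_{i=1}^n c_i M_i$ certified in Step~4, forces $M_1,\ldots,M_n$ to be simultaneously diagonalizable over $\bar{\Q}$. This produces linearly independent joint eigenvectors $v_1,\ldots,v_k$ with joint eigenvalue tuples $\xi_j=(\xi_{j,1},\ldots,\xi_{j,n})$, pairwise distinct by the squarefree condition. The vanishing $f_i(M_1,\ldots,M_n)v_j=f_i(\xi_j)v_j=0$ in Step~5 places each $\xi_j$ in $V(\I)$. Because $|V(\I)|\leq\dim_\Q A\leq k$ by hypothesis and we have produced $k$ distinct points of $V(\I)$, all three quantities collapse to $k$, so $\I$ is radical. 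The ring homomorphism $\phi:A\to\Q[M_1,\ldots,M_n]$ sending $x_s\mapsto M_s$ is then well-defined by Step~5, surjective by construction, and injective --- any $p$ with $p(M_1,\ldots,M_n)=0$ must vanish on every $\xi_j$ and hence lies in $I(V(\I))=\sqrt{\I}=\I$ --- so $\phi$ is a ring isomorphism.

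The main obstacle is to show that $\B$ itself is a basis of $A$. Here I would invoke the connected-to-$1$ hypothesis, which provides that $1\in\B$ and each $x^{\alpha_j}\in\B$ of positive degree can be written as $x_s\cdot x^{\alpha_i}$ for some $x^{\alpha_i}\in\B$ of strictly smaller degree, together with the check in Step~3. In the indexing consistent with the algorithm's transpose convention, Step~3 asserts that whenever $x_sx^{\alpha_i}=x^{\alpha_j}\in\B$, the vector $M_s^Te_i$ equals $e_j$. Induction on $\deg(x^{\alpha_j})$ then gives $e_j=(M^{\alpha_j})^Te_1$ for every $x^{\alpha_j}\in\B$, so the matrices $M^{\alpha_1},\ldots,M^{\alpha_k}$ applied to $e_1$ yield $k$ linearly independent vectors and are therefore themselves linearly independent in $\Q[M_1,\ldots,M_n]$. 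Pulling back through $\phi^{-1}$, the monomials in $\B$ are linearly independent modulo $\I$, and since $|\B|=k=\dim_\Q A$, $\B$ is a basis.

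Once $\B$ is identified as a basis and $M_s$ as (the transpose of) the multiplication matrix of $x_s$ on $A$ in the basis $\B$, the trace-matching in Step~6 certifies $[H_1]_{i,j}=\Tr(\mu_{b_ib_j})=[H_1^\B(\I)]_{i,j}$, giving $H_1=H_1^\B(\I)$. The standard identity $H_g^\B(\I)=H_1^\B(\I)\cdot g(M_1,\ldots,M_n)$, which expresses the self-adjointness of multiplication with respect to the trace bilinear form, then yields $H_g=H_g^\B(\I)$ as returned in Step~7. I expect the inductive argument that $\B$ is a basis to be the most delicate step, since it is the only place where both the connected-to-$1$ hypothesis on $\B$ and the structural condition in Step~3 are used in an essential way.
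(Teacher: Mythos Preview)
Your proof is correct and takes a genuinely different route from the paper's. The paper does not argue via simultaneous diagonalization first and the inductive formula $e_j=(M^{\alpha_j})^Te_1$. Instead, it defines from the columns of $M_1,\ldots,M_n$ the set of polynomials
\[
\mathcal{F}:=\Bigl\{\,x_jx^{\alpha_t}-\sum_{i=1}^k[M_j]_{i,t}\,x^{\alpha_i}\;:\;j=1,\ldots,n,\ t=1,\ldots,k\,\Bigr\},
\]
sets $J:=\langle\mathcal{F}\rangle$, and invokes Mourrain's border-basis criterion to conclude directly that $\B$ is a basis of $\Q[x_1,\ldots,x_n]/J$, using precisely the three hypotheses that Step~3 gives $N|_\B=\mathrm{Id}$, that $\B$ is connected to~$1$, and that Step~5 gives pairwise commutativity. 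Then $f_i(M_1,\ldots,M_n)=0$ forces $\I\subset J$, and the dimension inequality $\dim\Q[x]/\I\le k=\dim\Q[x]/J$ yields $\I=J$; Step~4 is used only at the end, to diagonalize and obtain the Vandermonde factorization $g(M_1,\ldots,M_n)=V^{-1}GV$ needed for $H_g$.

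What each approach buys: the paper's argument is shorter because the structural work (that $\B$ is a basis and the $M_s$ are multiplication matrices for \emph{some} ideal $J$) is outsourced to Mourrain's theorem, and radicality need not be established before identifying $\B$ as a basis. Your argument is more self-contained---it does not appeal to an external border-basis result---and it makes the role of Step~4 transparent from the outset by producing $k$ distinct points of $V(\I)$ and hence radicality immediately. One small point you leave implicit: after showing that $\B$ is a basis and $\phi$ is a ring isomorphism, the identification of $M_s$ with the transpose of the multiplication matrix still requires the observation that the linear isomorphism $A\to\Q^k$, $p\mapsto p(M_1,\ldots,M_n)^Te_1$, intertwines $\mu_{x_s}$ with left multiplication by $M_s^T$. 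This follows from commutativity together with your formula $e_j=(M^{\alpha_j})^Te_1$, but it deserves an explicit sentence.
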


\begin{proof} Assume Algorithm \ref{alg:cert} did not fail, and let $M_1, \ldots, M_n$ be the matrices computed in Step 2. Define the set of polynomials ${\mathcal F}$  from the columns of $M_1, \ldots, M_n$ as follows 
	$$
	{\mathcal F} :=\left\{ x_jx^{\alpha_t}-\sum_{i=1}^{k}[M_j]_{i,t}\,x^{\alpha_i}\;:\; j=1, \ldots, n, t=1, \ldots, k\right\}.
	$$
	Let $J:=\langle {\mathcal F} \rangle \subset \Q[x_1, \ldots, x_n]$. In \cite[Theorem 3.1]{Mourrain1999} it is proved that   $\B$ forms a basis for $\Q[x_1, \ldots, x_n]/J$ and ${\mathcal F}$ is a border basis for $J$ if the following three conditions are satisfied: 1. the map $N:{\rm span}(\B^+)\rightarrow {\rm span}(\B)$  defined as $N(x_jx^{\alpha_t}):=\sum_{i=1}^{k}[M_j]_{i,t}x^{\alpha_i}$ satisfy $N\left|_\B\right. = {\rm Id};\;$ 2. the space spanned by $\B$ is connected to $1;\;$ 3. $\{M_1, \ldots, M_n\}$ is a set of pairwise commuting matrices. The first condition is certified in Step 3, the second is an assumption, and the third is certified in Step 5. Also by   Step 5 we have  
$$
	f_i(M_1,M_2,\ldots,M_n)=0 \hbox{ for } i=1,\ldots,m, 
	$$
	 thus $\I\subset J$. Since $\dim \Q[x_1, \ldots, x_n]/J=k $ and $\dim \Q[x_1, \ldots, x_n]/\I\leq k $ by assumption, we must have that $\I=J$ and for $s=1, \ldots, n$, $M_s$ is the transpose of the matrix of the multiplication map 
	 $$\mu_{x_s}: \Q[x_1, \ldots, x_n]/\I\rightarrow \Q[x_1, \ldots, x_n]/\I, \quad [p]\mapsto [x_sp],
	 $$ with respect to the basis $\B$.   Step 6 certifies that the entries of $H_1^+$ are correct using Definition \ref{def:hermite_trace}. Let $p$ be the characteristic polynomial as in  Step 4. Since $\gcd(p(\lambda), p'(\lambda))\neq 1$ we have that  $p$ has $k$ distinct roots, so $M_1, \ldots, M_n$ are simultaneously diagonalizable, and we have for $g\in \Q[x_1, \ldots, x_n]$
	 $$
	 g(M_1, \ldots, M_n)= V^{-1}GV
	 $$
	 where $V=V_\B(\xi_1, \ldots, \xi_k)$ is the Vandermonde matrix of the exact roots of $V(\I)=\{\xi_1, \ldots, \xi_k\}$ with respect to $\B$ and $G$ is the diagonal matrix ${\rm diag}(g(\xi_1), \ldots, g(\xi_k))$. This gives  
	 $$
	 H_1\cdot g(M_1, \ldots, M_n)=(V^TV)\cdot (V^{-1}GV)=V^TGV=H_g.
	 $$
	  Thus,  once $H_1$ and $M_1, \ldots, M_n$ are certified, we have computed the certified matrix $H_g(\I)$. 
\end{proof}

\begin{remark} 
 For $\B=\{1, x_1,  \ldots, x_1^{k-1}\}$ we can guarantee that 
 $$\rank (H_1)=\rank (H_1^+)=k$$
 by checking that the difference between the first coordinates of $z_i$ and $z_j$ is at least  $2E$ for all $1\leq i<j\leq n$. In this case, since we assume that   $\|z_i-\xi_{j_i}\|_2<E$, then we must have that the first coordinates of $\xi_1, \ldots, \xi_k$ are all distinct. Since $H_1(\I)=V^TV$ where $V$ is the usual univariate Vandermonde matrix of these first coordinates of $\xi_1, \ldots, \xi_k$, we get that $H_1(\I)$ is invertible. Since $H_1^+(\I)=V_{\B^+}^TV_{\B^+}$, and $\rank(V_{\B^+})=k$, we have that  $\rank (H_1)=\rank (H_1^+)=k.$ For general $\B$ there are no such simple conditions, but the SVD of the Vandermonde matrix of $z_1, \ldots, z_k$ with respect to $\B$ gives a good indication that $\B$ is a basis for both $\C[x_1, \ldots, x_n]/I(z_1, \ldots, z_k)$ and $\C[x_1, \ldots, x_n]/I(\xi_1, \ldots, \xi_k)$. 
Also for $\B=\{1, x_1, \ldots,x_1^{k-1}\} $, Step 3 is simply checking if  $M_1$ has a companion matrix structure. In this case, in Step 6 we can certify $H_1^+$ using the Newton-Girard formulae (c.f. \cite{Macdonald79}),  computing the power sum elementary symmetric functions for degrees $1,\ldots, 2k$   from the coefficients of $p(x_1)$, and constructing the matrix $H_1^+$ according to (\ref{eq:H1f}). 
\end{remark}



\section{Extension to the non-radical case}\label{sect:Nonradical}

So far in this paper we assumed that the ideal $\I$ is radical and zero-dimensional. In this section, we describe what we can certify when we drop the assumption of radicality. 

First note that  Definition \ref{def:mult_hermite} for the Hermite matrix in terms of roots is well defined and continuous even if some of the roots are repeated. This is no longer true when we try to define the multiplication matrices from the roots. Using a notation similar to the previous section, for  $\B$ a basis for $\RR[x_1, \ldots, x_n]/\I$ and $g\in \Q[x_1, \ldots, x_n]$, denote by $M^\B_{g}$ the transpose of the matrix of multiplication $\mu_g:\Q[x_1, \ldots, x_n]/\I\rightarrow \Q[x_1, \ldots, x_n]/\I, \; [p]\mapsto [gp]$ in the basis $\B$. While in the case when $\I$ is radical we have 
$$M^\B_{g}=V_\B\, G\, V_\B^{-1},
$$
 with  $V_\B=[b(\xi)]_{\xi\in V(\I), b\in \B}$ and $G={\rm diag}(g(\xi)\,:\,\xi\in V(\I))$,  $M^\B_{g}$ is not diagonalizable when $\I$ is not radical. In particular,  $V_\B(\I)$ and $H_1^\B(I)$ are not invertible, so we cannot use them to compute multiplication matrices in the certification algorithm as above.

To overcome this difficulty, we notice that while in the non-radical case $H_1^\B(\I)$ is not invertible,  its maximal non-singular submatrix  is  the Hermite matrix  $H_1^{\bar{\B}}(\sqrt{\I})$ of the radical of $\I$, with respect to a subset $\bar{\B}\subset \B$ that is a basis for  $\RR[x_1, \ldots, x_n]/\sqrt{\I}$ (c.f. \cite{JanRonSza2007}). Denote by $\bar{H}_1$ this maximal non-singular submatrix of $H_1^\B(\I)$, with rows and columns corresponding to $\bar{\B}\subset \B$  and by $\bar{H}_1^{x_i}$ the submatrix of $H_1^{\B+}(\I)$ corresponding to rows indexed by $\bar{\B}$ and columns indexed by $x_i\cdot\bar{\B}$. Then we get the transpose of the matrix of multiplication by $x_i$ in  $\Q[x_1, \ldots, x_n]/\sqrt{\I}$ w.r.t. the basis $\bar{\B}$ by
$$
\bar{M}_{i}:= M_{x_i}^{\bar{\B}} = \bar{H}_1^{-1}\cdot \bar{H}_1^{x_i}\quad i=1, \ldots, n.
$$
Thus we can use $\bar{H}_1^{\bar{\B}^+}$ to compute the multiplication matrices of the radical $\sqrt{\I}$ and certify them.

We modify Algorithm \ref{alg:apprHerm} to output the maximal non-singular submatrix $\bar{H}_1$ of $H_1$ with rows and columns corresponding to $\bar{\B}\subset \B$, and its extension $\bar{H}^+_1$ to the basis $\bar{\B}^+$.

\begin{algorithm}[Hermite Matrix Computation - Non Radical Case]\label{alg:apprHerm-nonrad}
$\;$
\begin{description}
	\item[Input:] 
	  $k\in \N$, $\B=\{ x^{\alpha_1}, \ldots, x^{\alpha_k}\}$ connected to 1,
	 $E, M\in\R_+$ and $z_1, \ldots, z_k\in \C^n$ such that $E$ is a bound on the accuracy of $z_i$ and  $\|z_i\|_\infty \leq M-E$ for $i=1, \ldots, k$. 
	 \item[Output:] Fail or $\bar{k}\leq k\in \N$, $\bar{\B} \subset \B$ connected to 1, and $\bar{H}_1^+\in \Q^{\bar{l}\times \bar{l}}$ with rows and columns indexed by the elements of $\bar{\B}^+$
	 such that 
	 $$\rank~\bar{H}_1 = \rank~ \bar{H}_1^+=\bar{k},
	 $$
	 where $\bar{H}_1$ is the submatrix of $\bar{H}_1^+$ with rows and columns corresponding to $\bar{\B}$.

\item[~~~~~~1:] $H_1^{\B^+}(z_1,z_2,\ldots,z_k)\leftarrow \text{\sc Hermite Matrix Computation}\left(\B, \B^+, E, M, \{z_1, \ldots, z_k\}\right)$\\ (see Algorithm \ref{alg:apprHerm}).
Denote the resulting matrix by $H_1^+$ and its  submatrix corresponding to rows and columns of $\B$ by $H_1$.
\item[~~~~~~2:] Find $\bar{k}$ maximal and   $\bar{\B}\subset \B$ connected to 1 with $|\bar{\B}|=\bar{k}$ such that  the submatrix $\bar{H}_1$ of $H_1$, corresponding to rows and and columns in $\bar{\B}$, have the same rank as $H_1$. 

\item[~~~~~~3:] If $\rank (H_1^+)>\bar{k}$ return Fail. Else return  $\bar{H}_1^+$, the submatrix of $H_1^+$ with rows and columns corresponding to $\bar{\B}^+$.  
\end{description}
\end{algorithm}

\begin{theorem} \label{thm:correct2} Let $\B=\{ x^{\alpha_1}, \ldots, x^{\alpha_k}\}$, $\I=\langle f_1, \ldots, f_m\rangle  \subset \Q[x_1,\dots,x_n]$ be a zero dimensional  ideal  $V_\C(\I)=\{\xi_1, \ldots, \xi_{\bar{k}}\}\subset \C^n$ for some $\bar{k}\in \N$ 
with the multiplicity of $\xi_i$ denoted by $k_i$, where $k:=\sum_{i=1}^{\bar{k}} k_i=\dim \Q[x_1,\dots,x_n]/\I$. 
Let $E>0$ and $z_1, \ldots, z_{k}\in \C^n$, not necessarily all distinct. Assume that for each $i\in \{1, \ldots , \bar{k}\}$ there exists a multi-subset $Z_i$ of the multiset $\{z_1, \ldots, z_{k}\}$ such that   for $z\in Z_i$
$$\|z-\xi_{i}\|_2<E.
$$
Furthermore, assume that the multiset $Z_i$ has $k_i$ elements, counted with multiplicity,   $Z_i\cap Z_j=\emptyset$ for $1\leq i< j\leq \bar{k}$ and $k=\sum_{i=1}^{\bar{k}}k_i$. Assume further that  $\|z_{i}\|_\infty\leq M-E$ for all $i=1, \ldots, k$. Finally, assume that for  $x^\alpha=x_ix_jx^{\alpha_t} \,x^{\alpha_s}$  for $i,j=1, \ldots n$ and $t,s=1, \ldots, k$, the denominator of $\sum_{j=1}^{\bar{k}}k_i \xi_i^\alpha\in \Q$ is at most $\lceil (2E k n |\alpha| M^{|\alpha|-1})^{-1/2}\rceil$. Then Step 1 computes the exact Hermite matrices $H_1^+=H_1^{\B^+}(\I)$. Moreover, if $\bar{\B}$  defined in Step 2 forms  a basis of $ \Q[x_1,\dots,x_n]/\sqrt{\I}$ then $\rank (H_1^+)=\rank(\bar{H}_1)=\bar{k}$ and $\bar{H}_1^+=H_1^{\bar{\B}^+}(\sqrt{\I})$.
\end{theorem}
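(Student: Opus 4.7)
The plan is to follow the pattern of Theorem \ref{thm:correct1}, but carefully tracking root multiplicities, and then to invoke the structural result of \cite{JanRonSza2007} identifying a maximal non-singular submatrix of $H_1^{\B^+}(\I)$ with the Hermite matrix of $\sqrt{\I}$. I would split the argument into three steps: correctness of Step~1, the rank statement, and the final equality $\bar{H}_1^+ = H_1^{\bar{\B}^+}(\sqrt{\I})$.

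First, I would verify that Step~1 invokes Algorithm \ref{alg:apprHerm} on inputs satisfying a multiplicity-adapted version of Proposition \ref{prop:denom_bound}. By the identification of the trace- and root-based Hermite matrices (counting roots with multiplicity), for any monomial $x^\alpha = b_i b_j$ with $b_i,b_j\in\B^+$, the $(i,j)$-entry of the exact matrix $H_1^{\B^+}(\I)$ equals $\sum_{s=1}^{\bar{k}} k_s\,\xi_s^\alpha$. The numerical counterpart produced by Algorithm \ref{alg:apprHerm} is $\sum_{l=1}^k z_l^\alpha = \sum_{s=1}^{\bar{k}}\sum_{z\in Z_s} z^\alpha$; applying the Taylor-type estimate of Proposition \ref{prop:denom_bound} pointwise to each pair $(z,\xi_s)$ with $z\in Z_s$ and summing, using $\sum_s |Z_s|=k$, yields
$$
\left|\sum_{l=1}^k z_l^\alpha \;-\; \sum_{s=1}^{\bar{k}} k_s\,\xi_s^\alpha\right| \;\leq\; E\,k\,n\,d\,M^{d-1}.
$$
The hypothesis that the denominator of $\sum_s k_s \xi_s^\alpha$ is bounded by $\lceil(2Eknd M^{d-1})^{-1/2}\rceil$ then lets Theorem \ref{ratrec} reconstruct each entry exactly, so Step~1 outputs $H_1^+ = H_1^{\B^+}(\I)$.

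For the rank statement, I would factor $H_1^+ = (V^+)^T V^+$, where $V^+\in\C^{k\times l}$ is the Vandermonde of $\B^+$ evaluated at the roots listed with multiplicity. Rows indexed by copies of the same $\xi_s$ coincide, so $\rank V^+ \le \bar{k}$; conversely, the hypothesis that $\bar{\B}$ is a basis for $\Q[x_1,\ldots,x_n]/\sqrt{\I}$ implies, via the Chinese Remainder Theorem for radical zero-dimensional ideals, that the square submatrix $V_{\bar{\B}}(\xi_1,\ldots,\xi_{\bar{k}})\in\C^{\bar{k}\times\bar{k}}$ is invertible, so the columns of $V^+$ indexed by $\bar{\B}\subset\B^+$ already attain rank $\bar{k}$. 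Hence $\rank(H_1^+)=\rank(\bar{H}_1)=\bar{k}$, which is precisely the rank Step~2 detects, and no failure occurs in Step~3.

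The final identification $\bar{H}_1^+ = H_1^{\bar{\B}^+}(\sqrt{\I})$ is the most delicate point, and I would invoke \cite{JanRonSza2007} as a black box: once $\bar{\B}$ is known to be a basis for the radical quotient, the submatrix of $H_1^{\B^+}(\I)$ indexed by $\bar{\B}^+$ can be identified with $H_1^{\bar{\B}^+}(\sqrt{\I})$ through the CRT decomposition $\Q[x_1,\ldots,x_n]/\sqrt{\I}\cong \prod_{s=1}^{\bar{k}}\Q(\xi_s)$. The main obstacle is precisely this identification, since the combinatorial rank-based selection of $\bar{\B}$ in Step~2 need not a priori yield a basis of $\Q[x_1,\ldots,x_n]/\sqrt{\I}$; this is why the theorem's conclusion is conditioned on that basis property being verified separately. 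Combining the three parts gives both equalities claimed in the theorem.
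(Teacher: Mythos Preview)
Your proposal follows essentially the same route as the paper. The paper also reduces Step~1 to Proposition~\ref{prop:denom_bound} (via what it calls a ``limit argument'' from distinct points to the multiset, equivalent to your pointwise summation over the $Z_s$) together with Theorem~\ref{ratrec}, and for the last claim it likewise uses the Vandermonde factorization $H_1^+ = V_{\B^+}(\I)^T V_{\B^+}(\I)$ with roots listed with multiplicity and the invertibility of the square block $V_{\bar{\B}}$ under the basis hypothesis to read off both $\rank(H_1^+)=\rank(\bar{H}_1)=\bar{k}$ and $\bar{H}_1^+ = H_1^{\bar{\B}^+}(\sqrt{\I})$ directly, rather than invoking \cite{JanRonSza2007} as a black box inside the proof.
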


\begin{proof}
 Since   $\|z_i-\xi_{j_i}\|_2<E$ and $\|z_{i}\|_\infty\leq M-E$, we have that $\|\xi_{i}\|_\infty\leq M$, thus the assumptions  of Proposition \ref{prop:denom_bound} are satisfied. Therefore,  for all $\alpha\in \N^n$ as in the claim, we have that there is  at most one rational number within $E k n |\alpha| M^{|\alpha|-1}$ distance from $\sum_{i=1}^k z_i^\alpha$ with denominators bounded by $\lceil (2E k |\alpha| M^{|\alpha|-1})^{-1/2}\rceil$. We can take a limit argument from $k$ distinct points  to the multiset $\{(\xi_1, k_1), \ldots, (\xi_{\bar{k}}, k_{\bar{k}})\}$ and apply Proposition \ref{prop:denom_bound} to this multiset and get  that  $\sum_{i=1}^{\bar{k}}k_i\xi_i^\alpha$ is within that distance from $\sum_{i=1}^k z_i^\alpha$.  Using our assumption on the denominator of 
 $\sum_{i=1}^k\xi_i^\alpha\in \Q$, by Theorem  \ref{ratrec} the rational number reconstruction algorithm finds $\sum_{i=1}^{\bar{k}}k_i\xi_i^\alpha$. Thus, the entries of $H_1^+$ computed by Algorithm \ref{alg:apprHerm} are the same as the entries of $H_1^+(\I)$ as claimed.\\
 To prove the last claim, if  $\bar{\B}$ forms a basis for $ \Q[x_1,\dots,x_n]/\sqrt{\I}$ then  $V_{\bar{\B}}=[b(\xi_i)]_{i=1,\ldots, \bar{k}, \, b\in \bar{\B}}$ is a square submatrix of maximal rank of $V_{\B^+}(I)=[b(\xi)]_{\xi\in V(\I), \, b\in \B^+}$ with each roots in $V(\I)$ listed as many times as their multiplicity. Since $H_1^+=H_1^{\B+}(\I)=V_{\B^+}(I)^TV_{\B^+}(I)$, and $\bar{H}_1=V_{\bar{\B}}^T V_{\bar{\B}}$ which proves $\rank (H_1^+)=\rank(\bar{H}_1)=\bar{k}$ and $\bar{H}_1^+=H_1^{\bar{\B}^+}(\sqrt{\I})$. 
\end{proof}

We can use Algorithm \ref{alg:cert} unchanged with input $f, g$, $\bar{\B}$,  and $\bar{H}_1^+$ to compute  certified Hermite matrices $H_1$ and $H_g$ for $\sqrt{\I}$.  We have the following theorem. 

\begin{theorem}\label{thm:maincert-nonrad}
Let $f=(f_1, \ldots, f_m)\in \Q[x_1, \ldots, x_n]^m$, $\I=\langle f_1, \ldots, f_m\rangle  \subset \Q[x_1,\dots,x_n]$  a zero dimensional ideal, 
	 $g \in \Q[x_1,\ldots,x_n]$. Let
	 $\B$ connected to 1 with $|\B|=k$ and   $|\B^+|=l$  for some $k,l\in \N$. Suppose Algorithm \ref{alg:apprHerm-nonrad} returns $\bar{k}\leq k$,  $\bar{\B}$ connected to 1 with $|\bar{\B}|=\bar{k}$ and 
	 $\bar{H}_1^+$ with rows and columns indexed by the elements of $\bar{\B}^+$. If $\bar{k}\geq \dim\Q[x_1, \ldots, x_n]/\sqrt{\I}$ and Algorithm \ref{alg:cert} with inputs  $f, g$, $\bar{\B}$,  and $\bar{H}_1^+$ returns $H_1$ and $H_g$ then $\bar{\B}$ is a basis for $\Q[x_1, \ldots, x_n]/\sqrt{\I}$ and 
	 $$
	 H_1=H_1^{\bar{\B}}(\sqrt{\I}) \; {\rm and } \;  H_g=H_g^{\bar{\B}}(\sqrt{\I}).
	 $$
\end{theorem}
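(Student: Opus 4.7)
The plan is to follow the same strategy as in the proof of Theorem~\ref{thm:maincert}, recognizing that when Algorithm~\ref{alg:cert} is run with the truncated basis $\bar\B$ and the non-singular matrix $\bar H_1^+$ produced by Algorithm~\ref{alg:apprHerm-nonrad}, it necessarily certifies the radical $\sqrt{\I}$ rather than $\I$ itself. The dimension hypothesis now refers to $\sqrt{\I}$, and the simultaneous-diagonalizability test in Step~4 of Algorithm~\ref{alg:cert} becomes the decisive ingredient, since it forces the certified ideal to be radical.

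First, I would carry out the border-basis analysis exactly as in the proof of Theorem~\ref{thm:maincert}: let $\bar M_1,\ldots,\bar M_n$ be the matrices produced in Step~2 of Algorithm~\ref{alg:cert}, and set
$$\bar{\mathcal{F}} := \left\{\, x_j x^{\alpha_t} - \sum_{i=1}^{\bar{k}} [\bar{M}_j]_{i,t}\, x^{\alpha_i} \;:\; j=1,\ldots,n,\; t=1,\ldots,\bar{k}\,\right\}$$
with $\{x^{\alpha_i}\}=\bar\B$, and let $J := \langle \bar{\mathcal{F}} \rangle$. The identity condition verified in Step~3, the pairwise commutativity certified in Step~5, and the hypothesis that $\bar\B$ is connected to~$1$ allow me to invoke \cite[Theorem~3.1]{Mourrain1999} to conclude that $\bar\B$ is a basis for $\Q[x_1,\ldots,x_n]/J$, so $\dim_\Q \Q[x_1,\ldots,x_n]/J=\bar k$. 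Meanwhile, the relations $f_i(\bar M_1,\ldots,\bar M_n)=0$ verified in Step~5 yield $\I\subset J$.

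The new input beyond the radical-case proof is the distinctness check of Step~4: since $\gcd(p(\lambda),p'(\lambda))=1$, the combination $\sum_i c_i \bar M_i$ has $\bar k$ distinct eigenvalues, and commutativity then forces $\bar M_1,\ldots,\bar M_n$ to be simultaneously diagonalizable, so $J$ is radical. Hence $\sqrt{\I}\subset\sqrt{J}=J$, and combining this with the dimension hypothesis $\bar k \geq \dim\Q[x_1,\ldots,x_n]/\sqrt{\I}$ and $\dim\Q[x_1,\ldots,x_n]/J = \bar k$ gives $\sqrt{\I}=J$. Therefore $\bar\B$ is a basis for $\Q[x_1,\ldots,x_n]/\sqrt{\I}$ and each $\bar M_i$ is the transpose of multiplication by $x_i$ on this quotient. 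From here the remainder of the argument is identical to that of Theorem~\ref{thm:maincert}: Step~6 together with Definition~\ref{def:hermite_trace} certifies $H_1=H_1^{\bar\B}(\sqrt{\I})$, and simultaneous diagonalization of the $\bar M_i$ gives $H_g = H_1\cdot g(\bar M_1,\ldots,\bar M_n) = V^TGV = H_g^{\bar\B}(\sqrt{\I})$, where $V$ is the Vandermonde matrix on $V(\sqrt{\I})$ with respect to $\bar\B$. The main (and really the only) obstacle beyond Theorem~\ref{thm:maincert} is to upgrade the containment $\I\subset J$ to the equality $\sqrt{\I}=J$; this is precisely what the Step~4 test supplies, because it forces the ideal cut out by the certified multiplication matrices to have no multiple roots.
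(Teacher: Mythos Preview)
Your proposal is correct and follows essentially the same approach as the paper's proof: both run the border-basis argument from Theorem~\ref{thm:maincert} to produce an ideal $J$ with $\I\subset J$ and $\dim\Q[x]/J=\bar k$, use Step~4 to force $J$ to be radical (equivalently, to have $\bar k$ distinct roots), and then combine this with the hypothesis $\bar k\geq\dim\Q[x]/\sqrt{\I}$ to conclude $J=\sqrt{\I}$. The only cosmetic difference is that you phrase the key step algebraically ($\I\subset J$ and $J$ radical give $\sqrt{\I}\subset J$, hence equality by dimension), whereas the paper phrases it geometrically (the $\bar k$ distinct roots of $J$ lie in $V(\I)$, which has at most $\bar k$ points).
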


\begin{proof} Assume Algorithm \ref{alg:apprHerm-nonrad} did not fail, and returns $\bar{k}\leq k$,  $\bar{\B}$ connected to 1 with $|\bar{\B}|=\bar{k}$ and 
	 $\bar{H}_1^+$.   Then by Step 3 of Algorithm \ref{alg:apprHerm-nonrad} we have $\rank(H_1^+)=\rank (\bar{H}_1^+)=\rank(\bar{H}_1)=\bar{k}$ so we can define the $\bar{k}\times \bar{k}$ matrices $M_1, \ldots, M_n$  in Step 2 of Algorithm \ref{alg:cert}. The same argument as in the proof of Theorem \ref{thm:maincert} shows that $M_1, \ldots, M_n$ forms a system of (transpose) multiplication matrices for an ideal $J$ such that $\dim \Q[x_1, \ldots, x_n]/J=\bar{k}$, and $M_1, \ldots, M_n$ are simultaniously diagonalizable and their eigenvalues are the coordinates of the $\bar{k}$ distinct roots of $J$. Since $f_i(M_1, \ldots, M_n)=0$ for all $i=1, \ldots m$ we have that these $\bar{k}$ distinct roots are also  common roots of $\I$. By assumption, $\bar{k}\geq \dim\Q[x_1, \ldots, x_n]/\sqrt{\I}$, so $\I$ has at most $\bar{k}$ distinct roots, so we must have $\bar{k}= \dim\Q[x_1, \ldots, x_n]/\sqrt{\I}$ and $\bar{B}$ forms a basis $\Q[x_1, \ldots, x_n]/\sqrt{\I}$. Thus $J=\sqrt{\I}$. The rest of the proof is the same as the proof of Theorem \ref{thm:maincert} with $\sqrt{\I}$ replacing $\I$. 
\end{proof}


\section{Application: Rational certificate of non-negativity over real varieties}\label{sect:appl1}

As mentioned in the Introduction, the following application was inspired by \cite{CifuPar2017} where the authors give a method to compute a degree $d$ sum of squares (SOS) decomposition (if one exists) for a non-negative polynomial over a real algebraic set using a finite set of sample points from the (complex) algebraic set. In \cite{CifuPar2017}  they also give bounds on the number of sample points needed to decide whether a degree $d$ SOS decomposition exists. Their method uses semidefinite programming.

The application we present below also gives a certificate for the non-negativity of a polynomial  over a real algebraic set, and also uses sample points of the complex algebaric set. However,  it does not give an SOS  decomposition, instead it uses
Hermite matrices reconstructed and certified from the sample points. While the SOS decomposition constructed in \cite{CifuPar2017} is approximate, the construction we give here is exact, using rational numbers. 

Consider the following problem:

\begin{problem}\label{prob:poscert}
Given $f=(f_1, \ldots, f_s) \in \Q[x]^s$  and $g\in \Q[x]$ for $x=(x_1, \ldots, x_n)$.  Decide if $g(z)\geq 0$ for all $z\in V(f)\cap \R^n$  and give a rational certificate.
\end{problem}

We use the following notion of critical points of $g$ in $V(f)\subset \C^n$:

\begin{definition}
    Let $f=(f_1, \ldots, f_s) \in \Q[x]^s$  and $g\in \Q[x]$, and assume that $\I:=\langle f_1, \ldots, f_s\rangle$ is a radical ideal.  $z\in \C^n$ is a {\em  critical point} of $g$ in $V(f)$ if 
    $z\in V(f)$,
     $z$ is non-singular in $V(f)$, i.e. the Jacobian matrix $Jf(z)$ of $f$  has  rank $c:=n-\dim V(f)$,
     and  the Jacobian matrix $Jf(z)$ augmented with the row vector $\nabla g(z)$ has also rank  $c$.
  
\end{definition}

We can solve  Problem \ref{prob:poscert} using the following assumptions on $f=(f_1, \ldots, f_s)$ and $g$:

\begin{assumption}\label{assump} Let $f=(f_1, \ldots, f_s)\in \Q[x]^s$ and $g\in \Q[x]$ for $x=(x_1, \ldots, x_n)$. We assume
\begin{enumerate}
    \item For all $z\in V(f)$, the Jacobian matrix $Jf(z)$ of $f$  has rank $s\le n$, i.e., $f$ is a regular sequence and $V(f)$ is smooth.
    \item $V(f)\cap \R^n$ is bounded,
    \item $g$ has finitely many critical points on $V(f)$.
\end{enumerate}
\end{assumption}

We need the following lemma:

\begin{lemma} \label{lem:Lagr} Let $f\in \Q[x]^s$ and $g\in \Q[x]$ satisfying Assumption \ref{assump}. 
   Then, the set of polynomials in $\Q[x_1, \ldots, x_n, \lambda_1, \ldots, \lambda_s]$
    $$
   L(x,\lambda)=\{f_1, \ldots, f_s\}\cup\left\{\frac{\partial g}{\partial x_i}+ \sum_{j=1}^s\lambda_j\frac{\partial f_j}{\partial x_i}\;:\;i=1, \ldots, n\right\}
    $$
    has finitely many roots in $\C^{n+s}$, and their projections to the $x$ coordinates contain all real points where $g$ attains its extreme values on each connected component of   $\,\left(V(f)\setminus V(g)\right)\cap \R^{n}$.
\end{lemma}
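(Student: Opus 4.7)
The plan is to prove the two assertions separately, first establishing finiteness of $V(L)\subset \C^{n+s}$ and then showing that Lagrange's criterion captures every real point where $g$ attains an extreme value on the connected components in question.

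For finiteness, I would start by characterizing the $x$-projection $\pi_x(V(L))\subset \C^n$. Any $(z,\lambda)\in V(L)$ satisfies $f_1(z)=\cdots=f_s(z)=0$, so $z\in V(f)$; the remaining $n$ equations say $\nabla g(z)=-\sum_{j=1}^s \lambda_j\, \nabla f_j(z)$, so $\nabla g(z)$ lies in the row span of $Jf(z)$. Consequently the augmented matrix $\left[\begin{smallmatrix}Jf(z)\\ \nabla g(z)\end{smallmatrix}\right]$ has the same rank as $Jf(z)$, which equals $s$ by Assumption \ref{assump}(1); thus $z$ is a critical point of $g$ on $V(f)$ in the sense of the definition above. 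By Assumption \ref{assump}(3), there are only finitely many such $z$. Moreover, since $Jf(z)$ has full row rank $s$, for each critical point $z$ the vector $\lambda$ is uniquely determined by the linear system $Jf(z)^T \lambda = -\nabla g(z)$. Combining these two facts yields $|V(L)|<\infty$.

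For the extreme-value claim, I would invoke compactness together with the classical Lagrange multiplier theorem. Assumption \ref{assump}(2) together with the fact that $V(f)$ is Zariski-closed gives that $V(f)\cap\R^n$ is compact. Fix a connected component $C$ of $(V(f)\setminus V(g))\cap \R^n$; its closure $\overline{C}$ in $\R^n$ is compact, $g$ has constant sign on $C$, and $g$ vanishes on $\overline{C}\setminus C \subset V(g)$. Let $z\in C$ be a point where $g|_C$ attains an extreme value. By Assumption \ref{assump}(1), near $z$ the set $V(f)\cap\R^n$ is a smooth submanifold of $\R^n$ of codimension $s$ whose tangent space is $\ker Jf(z)$. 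The standard Lagrange multiplier theorem then produces real numbers $\lambda_1,\dots,\lambda_s$ with $\nabla g(z)+\sum_{j=1}^s \lambda_j\, \nabla f_j(z)=0$, so $(z,\lambda)\in V(L)$ and $z=\pi_x(z,\lambda)$ as required.

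The finiteness part is essentially bookkeeping once Assumption \ref{assump}(1) is used to recover $\lambda$ from $z$, so I do not anticipate any difficulty there. The step that needs the most care is the extreme-value statement, since one must be confident that the Lagrange multiplier theorem genuinely applies on each connected component $C$: this requires both the local smoothness of $V(f)$ at $z$ (Assumption \ref{assump}(1)) and the fact that extrema really do exist in $C$ rather than being pushed off to the boundary stratum $V(g)$. Compactness of $V(f)\cap\R^n$ together with the continuity of $g$ on $\overline{C}$ handles this, but I would be careful to phrase the claim as ``if $g|_C$ attains an extremum at $z\in C$'' so that the statement is vacuously correct for components on which no interior extremum exists.
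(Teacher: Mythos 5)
Your proposal is correct and follows essentially the same route as the paper: characterize the $x$-projection of $V(L)$ as critical points, use Assumption 1 (full row rank of $Jf$) to recover $\lambda$ uniquely and Assumption 3 for finiteness, then use boundedness of $V(f)\cap\R^n$ and the Lagrange multiplier theorem on each connected component. The only difference is that the paper additionally argues that a \emph{nonzero} extreme value is genuinely attained inside $C$ (since $g$ vanishes on $\overline{C}\setminus C$), a fact it needs later in Theorem \ref{thm:J}, whereas you phrase the containment conditionally, which suffices for the lemma as stated.
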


\begin{proof} First we prove that the projection  $ V(L)\subset \C^{n+s}$ onto the $x$-coordinates are the critical points of $g$ in $V(f)$. Let $(x,\lambda)\in V(L)\subset \C^{n+s}$.  Then clearly $x\in V(f)$, and because of our first assumption and that $s\geq c=n-\dim V(f)$ we have that $s=c$ and $x$ is non-singular in $V(f)$. By  $(x,\lambda)\in V(L)$ we also have that  $Jf(x)$ augmented with $\nabla g(x)$ has rank at most $c$, thus $x$ is a critical point.  Conversely, let $x$ be a critical point of $g$ on $V(f)$. Then $f_1, \ldots, f_s$ vanishes at $x$, and since both $Jf(x)$ and the augmented matrix $Jf(x)$ by $\nabla g(x)$ has rank $s$, $\nabla g(x)$ is in the row space of $Jf(x)$, thus there exists $\lambda \in \C^s$ such that $(x,\lambda)\in V(L)$. 

Next we prove that $V(L)\subset \C^{n+s}$ is finite. By assumption, $g$ has finitely many critical points in $V(f)$, thus the projection of $ V(L)$ onto the $x$ coordinates is finite.  Since $Jf(x)$ has full row rank for every $x\in V(f)$, for every critical point $x$ there is a unique $\lambda\in \C^s$ such that $(x,\lambda)\in V(L)$. Thus $V(L)$ is finite. 

To prove the second claim,  assume that  $\,\left(V(f)\setminus V(g)\right)\cap \R^n \neq  \emptyset$ and let $C$ be a  connected component of
the set $(V(f)\setminus  V(g))\cap \R^n$. Since $V(f)\cap \R^n$ is bounded, $C$ is bounded as well.
Since  $C \not\subset V (g)$, there exists  $ x \in C$  with $g(x) \neq 0$. 
Let $\overline{C}$  be the Euclidean
closure of $C$ so that $\overline{C}\subset V \cap \R^n$ is closed and bounded,
and $g$ vanishes identically on $\overline{C}\setminus C$. By the extreme value theorem,
$g$ attains both a minimum and a maximum on $\overline{C}$. Since $g$ is not identically zero on $\overline{C}$,
either the minimum or the maximum value of $g$ on $\overline{C}$ must be nonzero, so~$g$
attains a non-zero extreme value on $C$. By the Lagrange multiplier theorem, using our assumptions, the points in $C$  where $g$ attains its  extreme values are critical points of $g$ in $V(f)$, which proves the claim.
\end{proof}

Now we are ready to prove the main result of the section:

\begin{theorem}\label{thm:J}
Let $f\in \Q[x]^s, g\in \Q[x]$ and $L\subset \Q[x,\lambda]$ as in Lemma \ref{lem:Lagr}. Let $J$ be the ideal generated by $L$ in $\Q[x, \lambda]$, and $\B\subset \Q[x, \lambda]$ be any finite monomial basis for $\Q[x, \lambda]/J$. Then
$$\sigma (H^\B_g) = \sigma(H^\B_{g^2})
\text{ if and only if } g(x)\geq 0 \text{ for all }x\in V(f)\cap \R^n. \footnote{We thank to Bernard Mourrain for pointing out this simple fact}
$$
\end{theorem}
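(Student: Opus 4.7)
The plan is to combine the Multivariate Hermite Theorem with Lemma~\ref{lem:Lagr} and a Lagrange-multiplier argument on the real locus of $V(f)$.

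First, I would apply Theorem~\ref{thm:mult_herm} to both $H_g^\B$ and $H_{g^2}^\B$. This is legitimate because $J$ is zero-dimensional by Lemma~\ref{lem:Lagr} and $\B$ is a monomial basis for $\Q[x,\lambda]/J$. The signature formula gives
$$\sigma(H_g^\B) = \#\{p\in V_\R(J):g(p)>0\} - \#\{p\in V_\R(J):g(p)<0\},$$
while, since $g^2(p) \geq 0$ everywhere on $\R^{n+s}$,
$$\sigma(H_{g^2}^\B) = \#\{p\in V_\R(J):g(p)\neq 0\}.$$
Subtracting, the identity $\sigma(H_g^\B) = \sigma(H_{g^2}^\B)$ is equivalent to the statement that $\#\{p\in V_\R(J):g(p)<0\} = 0$.

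Second, I would translate this condition into one on real critical points of $g$ on $V(f)$. The proof of Lemma~\ref{lem:Lagr} identifies $V(J) \subset \C^{n+s}$ with the graph of the assignment taking each critical point of $g$ on $V(f)$ to its (unique) Lagrange multiplier. Because $Jf(y)$ has full row rank $s$ everywhere on $V(f)$ by Assumption~\ref{assump}, the multiplier $\lambda$ solving $\nabla g(y) = -\sum_j \lambda_j \nabla f_j(y)$ is unique, and it is real whenever $y$ is real. Hence projection to the $x$-coordinates induces a bijection between $V_\R(J)$ and the set of real critical points of $g$ on $V(f)$, so the condition from the first step becomes: no real critical point of $g$ on $V(f)$ has $g<0$.

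Third, I would close the equivalence by a Lagrange-multiplier argument. The implication ``$g\geq 0$ on $V(f)\cap\R^n$ forces $g\geq 0$ at every real critical point'' is immediate. For the converse, suppose $g(x_0)<0$ for some $x_0 \in V(f)\cap \R^n$, and let $C$ be the connected component of $(V(f)\setminus V(g))\cap \R^n$ containing $x_0$; by continuity, $g<0$ throughout $C$. Since $V(f)\cap \R^n$ is bounded, $\overline{C}$ is compact, so $g$ attains a minimum on $\overline{C}$. This minimum is $\leq g(x_0)<0$, hence it is attained at some $y^* \in C$ (the boundary $\overline{C}\setminus C$ consists of zeros of $g$). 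By the Lagrange multiplier theorem applied to the smooth manifold $V(f)$, $y^*$ is a critical point of $g$ with $g(y^*)<0$, which contradicts the conclusion of the second step.

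The main obstacle is really in the third step: one must verify that the minimum lies in the interior of $C$ rather than on $\overline{C}\setminus C$, and this follows cleanly from the strict sign of $g(x_0)$ together with the identity $g\equiv 0$ on $\overline{C}\setminus C$. The bookkeeping in the second step --- identifying $V_\R(J)\subset \R^{n+s}$ with the real critical points of $g$ on $V(f)\subset \R^n$ --- is a routine consequence of the full-rank hypothesis on $Jf$.
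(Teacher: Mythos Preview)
Your proposal is correct and follows essentially the same approach as the paper: both combine the Multivariate Hermite Theorem with the Lagrange-multiplier argument from Lemma~\ref{lem:Lagr}, differing only in the order of presentation (you apply Hermite's theorem first and then reduce to real critical points, while the paper first establishes the equivalence between nonnegativity on $V(f)\cap\R^n$ and on $V_\R(J)$, and then invokes Hermite). Your computation $\sigma(H_{g^2}^\B)=\#\{p\in V_\R(J):g(p)\neq 0\}$ is in fact the cleaner formulation of the key signature identity.
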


\begin{proof}
 First we prove that $g(x)\ge 0$ for all $x\in V(f)\cap \R^n$ if and only if $g(x,\lambda)\ge 0$ for all $(x,\lambda)\in V(L)\cap \R^{n+s}$. Assume that $g(x)\ge 0$ for all $x\in V(f)\cap \R^n$. Let $(x,\lambda)\in V(L)\cap \R^{n+s}$. Then   $x\in V(f)\cap \R^n$ and since  $g\in \Q[x]$,  $g(x,\lambda)=g(x)\ge 0$. Conversely, assume $g(x,\lambda)\ge 0$ for all $(x,\lambda)\in V(L)\cap \R^{n+s}$. Suppose there exists $x\in V(f)\cap \R^n$  such that $g(x)< 0$. Let $C$ be the bounded connected component of $\,\left(V(f)\setminus V(g)\right)\cap \R^n$ that contains $x$ and $\overline{C}$ its Euclidean closure. Let $x^*\in \overline{C}$ where $g$ attains its minimum on $\overline{C}$. Then $g(x^*)\leq g(x)< 0$, so $x^*\in C$ and $x^*$ is a critical point of $g$ in $V(f)$. Thus there exists  a unique $\lambda\in \R^s$, the solution of a linear system,  such that $(x^*, \lambda)\in V(L)\cap \R^{n+s}$. But then $g(x^*, \lambda)=g(x^*)\ge 0$, a contradiction. 
 
 By Lemma \ref{lem:Lagr} we have $V(L)$ is finite, so $\B$ is finite. Since $L$ has real (rational) coefficients, by Hermite's theorem
 \begin{eqnarray*}
 &&\sigma(H^\B_g)= \#\{(x, \lambda)\in V(L)\cap \R^{n+s}\;:\; g(x)>0\}-\#\{(x, \lambda)\in V(L)\cap \R^{n+s}\;:\; g(x)<0\}\\
 &&\sigma(H^\B_{g^2})= \#\{(x, \lambda)\in V(L)\cap \R^{n+s}\;:\; g(x)>0\}.
 \end{eqnarray*}
 Thus,  $\sigma(H^\B_g)=\sigma(H^\B_{g^2})$ if and only if $\#\{(x, \lambda)\in V(L)\cap \R^{n+s}\;:\; g(x)<0\}=0$, i.e. for all $(x, \lambda)\in V(L)\cap \R^{n+s}$ we have $g(x)\geq 0$.  
 This proves the theorem.
\end{proof}

\begin{corollary}
Let $f \in \Q[x]^s, g\in \Q[x]$ and $L\subset \Q[x,\lambda]$ as in Lemma \ref{lem:Lagr} and  $\B\subset \Q[x, \lambda]$ as in Theorem \ref{thm:J}. Then $H^\B_{g}$ and $H^\B_{g^2}$ have rational entries, and can be computed and certified from approximate roots for the system $L$. Moreover,  $
\sigma (H^\B_g) = \sigma(H^\B_{g^2})
$ can be also certified exactly over the rationals, giving a rational certificate  for  $g(x)\geq 0$ for all $x\in V(f)\cap \R^n$.
\end{corollary}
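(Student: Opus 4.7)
The plan is to assemble the three ingredients developed earlier in the paper: the rationality of Hermite matrices of rational ideals, the construction/certification pipeline of Sections \ref{sect:ConstructHermite}--\ref{sect:Nonradical}, and the exact rational signature computations of Remark \ref{rmk:signature_compt}. The criterion of Theorem \ref{thm:J} then delivers the conclusion.

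First I would establish rationality. Since $L\subset \Q[x,\lambda]$ has rational coefficients, the ideal $J=\langle L\rangle$ is defined over $\Q$, and by Lemma \ref{lem:Lagr} it is zero-dimensional. By Definition \ref{def:hermite_trace}, the entries of $H^\B_g(J)$ and $H^\B_{g^2}(J)$ are traces of multiplication matrices by $g\cdot b_ib_j$ and $g^2\cdot b_ib_j$ on $\Q[x,\lambda]/J$, which are matrices over $\Q$ in the basis $\B$. Hence both Hermite matrices have entries in $\Q$.

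Next I would invoke the symbolic--numeric construction pipeline. Since $J$ is zero-dimensional, the methods described in Section \ref{sec:numroots} produce, for any desired accuracy $E$, approximate roots $\{z_1,\ldots,z_k\}\subset \C^{n+s}$ for the system $L$, with a certified bound on $\|\xi_i-z_{j_i}\|_2$. Feeding these into Algorithm \ref{alg:apprHerm} (or Algorithm \ref{alg:apprHerm-nonrad} in the non-radical case) yields a rational extended Hermite matrix $H_1^+$. Algorithm \ref{alg:cert}, run with inputs $L$, the auxiliary polynomial $g$, a basis $\B$ connected to $1$, and $H_1^+$, then certifies $H^\B_1(J)$ and returns $H^\B_g=H_1\cdot g(M_1,\ldots,M_{n+s})$ by Theorem \ref{thm:maincert} (or Theorem \ref{thm:maincert-nonrad}). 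Repeating Step 7 of Algorithm \ref{alg:cert} with $g^2$ in place of $g$ yields the certified matrix $H^\B_{g^2}=H_1\cdot g^2(M_1,\ldots,M_{n+s})$. All arithmetic in both steps is performed over $\Q$, so the output matrices are exact.

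Finally, since $H^\B_g$ and $H^\B_{g^2}$ are certified rational symmetric matrices, their signatures can be computed exactly over $\Q$ by either of the two procedures in Remark \ref{rmk:signature_compt}: applying Descartes' rule of signs to the characteristic polynomial, or reading off the signs of the diagonal entries in an $LDL^T$ decomposition. Comparing the two integer signatures and invoking Theorem \ref{thm:J} gives a rational certificate for $g\ge 0$ on $V(f)\cap\R^n$ precisely when $\sigma(H^\B_g)=\sigma(H^\B_{g^2})$.

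The main obstacle is ensuring that the certification pipeline succeeds. This requires a basis $\B$ of $\Q[x,\lambda]/J$ (resp.\ of $\Q[x,\lambda]/\sqrt{J}$ in the non-radical case) that is connected to $1$, and approximate roots of accuracy sufficient for rational number reconstruction in Algorithm \ref{alg:apprHerm}. The first can be arranged by choosing $\B$ appropriately, and the second can always be achieved by additional Newton iterations applied to a square subsystem of $L$, shrinking $E$ until the denominator bound in Theorem \ref{thm:correct1} (or Theorem \ref{thm:correct2}) is met. With these in place, all remaining computations are exact over $\Q$, so the resulting equality or inequality of signatures is itself a rational certificate.
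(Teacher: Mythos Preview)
Your proposal is correct and follows essentially the same approach as the paper's own proof: rationality of the Hermite matrices from the rational coefficients of $L$ and $g$, exact signature computation over $\Q$ (the paper mentions only the LU-decomposition route), and appeal to Theorem \ref{thm:J}. Your version is considerably more detailed in spelling out the construction/certification pipeline via Algorithms \ref{alg:apprHerm} and \ref{alg:cert}, which the paper's proof leaves implicit, but the logical structure is the same.
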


\begin{proof}
Since $L$ and $g$ has rational coefficients, the matrices $H^\B_{g}$ and $H^\B_{g^2}$ have rational entries. One can find their signature by computing their LU-decomposition for example, again resulting in rational matrices. These give a rational certificate for $
\sigma (H^\B_g) = \sigma(H^\B_{g^2})
$ and by the previous theorem for $g(x)\geq 0$ for all $x\in V(f)\cap \R^n$.
\end{proof}

\begin{remark}
 The size $k$ of the matrices $H^\B_{g}$ and $H^\B_{g^2}$ is equal to the number of critical points of $g$ in $V(f)$. A brute estimate is given by  $|V(L)|\leq d^{n+s}$ with $d=\max \{\deg(g), \deg(f_1), \ldots, \deg(f_s)\}$,  using the Bezout bound for the polynomial system $L$.
\end{remark}

The following proposition is needed in the algorithm below.
\begin{proposition}\label{prop:Frob}
Let $\{z_1, \ldots, z_k\},\{\xi_1,\ldots, \xi_k\} \subset \C^n$ be such that for some $E, M>0$
$$\|\xi_i-z_i\|_2<E \text{ and } \|z_i\|_2\leq M-E\quad i=1, \ldots, k.
$$
Let $\B=\{x^{\alpha_1}, \ldots, x^{\alpha_k}\}\subset \Q[x_1, \ldots, x_n]$ be a set of monomials such that $\deg(x^{\alpha_i})\leq d$ for some $d> 0$. 
Then the Frobenious distance of the Vandermonde matrices is bounded by
$$
\|V_\B(\xi_1, \ldots, \xi_k)-V_\B(z_1, \ldots, z_k)\|_F \leq kndM^{d-1}E.
$$
\end{proposition}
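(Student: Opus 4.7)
The plan is to bound each entry of the difference matrix $V_\B(\xi_1,\ldots,\xi_k)-V_\B(z_1,\ldots,z_k)$ by means of a multivariate Taylor argument analogous to the one used in Proposition~\ref{prop:denom_bound}, and then to assemble these entrywise bounds into a Frobenius-norm estimate.

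First I would observe that the $(i,j)$-entry of $V_\B(\xi_1,\ldots,\xi_k)-V_\B(z_1,\ldots,z_k)$ is $\xi_i^{\alpha_j}-z_i^{\alpha_j}$, so
\[
\|V_\B(\xi_1,\ldots,\xi_k)-V_\B(z_1,\ldots,z_k)\|_F^{\,2} \;=\; \sum_{i=1}^{k}\sum_{j=1}^{k}\bigl|\xi_i^{\alpha_j}-z_i^{\alpha_j}\bigr|^{2}.
\]
Thus the problem reduces to bounding a single entry $|\xi_i^{\alpha_j}-z_i^{\alpha_j}|$ uniformly in $i,j$.

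Next I would fix $i$ and $j$ and apply the multivariate Taylor expansion of the monomial $x^{\alpha_j}$ at $z_i$, exactly as in the proof of Proposition~\ref{prop:denom_bound}. This produces coefficients $R_{j,t}$ (depending on $z_i$ and $\xi_i$) such that
\[
\xi_i^{\alpha_j}-z_i^{\alpha_j} \;=\; \sum_{t=1}^{n} R_{j,t}(z_i,\xi_i)\,(\xi_{i,t}-z_{i,t}),
\]
with each $R_{j,t}$ bounded, on the ball of radius $E$ around $z_i$, by the supremum of a single partial derivative of $x^{\alpha_j}$, which is at most $d M^{d-1}$ since $\deg(x^{\alpha_j})\le d$ and every point $y$ with $\|y-z_i\|_2<E$ satisfies $\|y\|_\infty\le M$ by hypothesis. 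Combining this with $|\xi_{i,t}-z_{i,t}|\le \|\xi_i-z_i\|_2<E$ and summing over $t=1,\ldots,n$ yields the entrywise bound
\[
\bigl|\xi_i^{\alpha_j}-z_i^{\alpha_j}\bigr| \;\le\; n\,d\,M^{d-1}\,E.
\]

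Finally I would substitute this bound into the sum for the squared Frobenius norm: there are $k^{2}$ terms, each at most $(ndM^{d-1}E)^{2}$, so
\[
\|V_\B(\xi_1,\ldots,\xi_k)-V_\B(z_1,\ldots,z_k)\|_F^{\,2} \;\le\; k^{2}\bigl(ndM^{d-1}E\bigr)^{2},
\]
and taking the square root gives the desired inequality $kndM^{d-1}E$. There is no serious obstacle in this argument; the only care needed is the uniformity of the partial derivative bound on the $E$-ball around each $z_i$, which is supplied directly by the assumption $\|z_i\|_\infty\le M-E$.
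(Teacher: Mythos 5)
Your proof is correct and follows essentially the same route as the paper: an entrywise bound $|\xi_i^{\alpha_j}-z_i^{\alpha_j}|\le ndM^{d-1}E$ obtained via the multivariate Taylor argument from Proposition~\ref{prop:denom_bound}, then summing the $k^2$ squared entries and taking the square root. The paper simply cites that earlier Taylor argument rather than repeating it, but the content is identical.
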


\begin{proof}
As in the proof of Proposition \ref{prop:denom_bound} we can see that for $\alpha\in \N^n$, $|\alpha|\leq d$ we have 
$$
\left|\xi_i^\alpha-z_i^\alpha\right|\leq ndM^{d-1}E.
$$
Taking the Frobenius norm of the difference of the Vandermonde matrices gives  $$\|V_\B(\xi_1, \ldots, \xi_k)-V_\B(z_1, \ldots, z_k)\|_F \leq \sqrt{k^2(ndM^{d-1}E)^2}=kndM^{d-1}E.
$$
\end{proof}

\begin{algorithm} (Certification that $g$ is non-negative over $V(f)\cap \R^n$)
\begin{description}
\item[Input:] $n\in \N$, $f \in \Q[x]^s, g\in \Q[x]$ for $x=(x_1, \ldots, x_n) $ satisfying Assumption \ref{assump}.
\item[Output:] True: $g(z)\geq 0$ for all $z\in V(f)\cap \R^n$\\
 False: $g(z)$ is not $\geq 0$ for all 
        $z\in V(f)\cap \R^n$ \\
        or Fail
     
    \item[~~~~~~1:] Construct $L(x,\lambda)$ as defined in Lemma \ref{lem:Lagr}.
    \item[~~~~~~2:] Compute $\{(z_1,\mu_1) \ldots, (z_k, \mu_k)\}\subset \C^{n+s}$,    finitely many approximate roots  of the real polynomial system $L(x,\lambda)$ as defined in Lemma \ref{lem:Lagr}, together with their  precision $E$ and a bound $M$ such that $\|z_i\|\leq M-E$ for $i=1, \ldots, k$.  Return Fail if $L(x,\lambda)$ has infinitely many roots or $z_i=z_j$ for some $1\leq i<j\leq k$. 
    \item[~~~~~~3:] Compute $\B=\{x^{\alpha_1}, \ldots, x^{\alpha_k}\}$  connected to 1, such that the Vandermonde matrix $V_\B(z_1, \ldots, z_k)$ has smallest singular value is greater than  $kndM^{d-1}E$, where $d$ is the maximal degree of the monomials in $\B$ (see Proposition \ref{prop:Frob}). Let  $\B^+:=\bigcup_ix_i\B\cup \bigcup_j\lambda_j\B$. 
    \item[~~~~~~4:] $H_1^+\leftarrow \text{\sc Hermite Matrix Computation}\left(\B, \B^+, E, M, \{(z_1,\mu_1), \ldots, (z_k, \mu_k)\}\right)$ (see Algorithm \ref{alg:apprHerm})
    \item[~~~~~~5:] For $J:=\langle L(x,\lambda)\rangle$ compute  $H_g(J)$ and $H_{g^2}(J)$ by calling  {\sc Hermite Matrix Certification} with input $L(x,\lambda), g, \B, \B^+, H_1^+ $ and $L(x,\lambda), g^2, \B, \B^+, H_1^+ $ respectively, which algorithm can also return Fail (see Algorithm \ref{alg:cert})
    \item[~~~~~~6:] Calculate the signatures $\sigma(H_g(J)$ and $\sigma(H_{g^2}(J))$ (see Remark \ref{rmk:signature_compt})
    \item[~~~~~~~7:] if $\sigma(H_g(J))$=$\sigma(H_{g^2}(J))$, then return True\\
    else return False

\end{description}
\end{algorithm}

\section{Application: Real Root Certification with Hermite Matrices}\label{sect:appl2}

Let $\ff=(f_1, \ldots, f_m)$  
$\in \Q [x_1,\dots,x_n]^m$  such that  $\I=\langle f_1, \ldots, f_m\rangle $ is a zero dimensional radical ideal. The certification problem is the following: We are given   $z^* \in \Q^n$ and $\varepsilon\in\Q_+$ , we would like to know if there is any exact root of $\ff$ within the $\varepsilon$ ball of  $z^*$ in $\RR^n$.
In this section, we present how one can use the signature of Hermite matrices to answer this certification problem as an application of Hermite matrices.

Now we introduce the certification theorem as a corollary of the Multivariate Hermite Theorem \ref{thm:mult_herm}.

\begin{corollary}\label{thm:herm_cert}
	Let $\ff=(f_1, \ldots, f_m)$, $f_i \in \Q[x_1,\dots,x_n]$ for all $i=1,\dots m$, and $\I=\langle f_1, \ldots, f_m \rangle$ is a zero dimensional radical ideal. Given  $z^* \in \Q^n$ and  $\varepsilon \in \Q_+$,	define $g(x):=\|x-z^* \|_2^2-\varepsilon^2\in \Q[x_1, \ldots, x_n]$. Then
	$$ \sigma(H_1(\I)) = \sigma(H_g(\I))$$
	if and only if 
	there is no real root within the closed ball in $\R^n$  of radius $\varepsilon$ around $z^*$. 
\end{corollary}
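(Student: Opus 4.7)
The plan is to deduce the corollary as a direct application of the Multivariate Hermite Theorem (Theorem~\ref{thm:mult_herm}) applied to the two polynomials $1$ and $g$, and then to compare the counts of real roots on either side of the $\varepsilon$-sphere.

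First, I would apply Theorem~\ref{thm:mult_herm} to the constant polynomial $1$. Since $1>0$ everywhere on $V_\R(\I)$, the theorem yields
\[
\sigma(H_1(\I)) \;=\; \#V_\R(\I),
\]
the total number of real common roots of $\I$. Next, I would apply Theorem~\ref{thm:mult_herm} to $g(x)=\|x-z^*\|_2^2-\varepsilon^2$, obtaining
\[
\sigma(H_g(\I)) \;=\; \#\{x\in V_\R(\I):g(x)>0\}-\#\{x\in V_\R(\I):g(x)<0\}.
\]

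Now I would partition $V_\R(\I)$ into three parts according to the sign of $g$: let $A$ count the real roots with $\|x-z^*\|_2>\varepsilon$ (where $g>0$), let $B$ count those with $\|x-z^*\|_2<\varepsilon$ (where $g<0$), and let $C$ count the roots lying exactly on the sphere $\|x-z^*\|_2=\varepsilon$ (where $g=0$). Substituting into the two signature formulas gives
\[
\sigma(H_1(\I))=A+B+C \quad\text{and}\quad \sigma(H_g(\I))=A-B,
\]
so
\[
\sigma(H_1(\I))-\sigma(H_g(\I))=2B+C.
\]
Since $B,C\ge 0$, the equality $\sigma(H_1(\I))=\sigma(H_g(\I))$ holds if and only if $B=C=0$, i.e.\ if and only if no real root of $\I$ lies in the closed ball of radius $\varepsilon$ around $z^*$. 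This yields both implications of the corollary.

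There is essentially no obstacle here; the only mild subtlety is remembering to account for real roots that land exactly on the sphere (the term $C$), which is handled automatically because $g$ vanishes on such points and hence they contribute to $\sigma(H_1(\I))$ but not to $\sigma(H_g(\I))$. In particular, the closed ball (rather than the open ball) is precisely the correct region, matching the statement. No additional hypothesis beyond radicality and zero-dimensionality of $\I$ is needed, since the Hermite theorem is stated exactly in that setting.
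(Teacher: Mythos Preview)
Your proof is correct and follows essentially the same approach as the paper: both apply the Multivariate Hermite Theorem to $1$ and to $g$, partition $V_\R(\I)$ by the sign of $g$, and use non-negativity of the counts to conclude that the signatures agree precisely when $B=C=0$. Your presentation via the single identity $\sigma(H_1(\I))-\sigma(H_g(\I))=2B+C$ is slightly more streamlined than the paper's separate treatment of the two implications, but the underlying argument is identical.
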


\begin{proof}

By Theorem \ref{thm:mult_herm}, $\sigma(H_g(\I)) = \#\{x \in V_\RR(\I)~|~ g(x)>0\}-\# \{ x \in V_\RR(\I)~|~ g(x)<0\} $. Since 1 is greater than 0, the signature of $H_1(\I)$ gives the number of all real roots of $\I$. Thus, it can be written as 
\begin{equation}\label{eqn:H1I}
\sigma(H_1(\I))=\#\{x \in V_\RR(\I)~|~ g(x)>0\}+\# \{ x \in V_\RR(\I)~|~ g(x)<0\}+\#\{x \in V_\RR(\I)~|~ g(x)=0\}  
\end{equation}
 for any $g(x)\in\R[x_1, \ldots, x_n]$.
Now, let  $g(x)=\|x-z^* \|_2^2-\varepsilon^2\in \R[x_1, \ldots, x_n]$.\\
Assume that $\sigma(H_1(\I)) = \sigma(H_g(\I))$. Using the definitions above
and cancelling the terms $ \#\{x \in V_\RR(\I)~|~ g(x)>0\}$ from both sides, we yield $ 2\# \{ x \in V_\RR(\I)~|~ g(x)<0\} = -\#\{x \in V_\RR(\I)~|~ g(x)=0\}.$ The only solution to this equation is 
$$ \# \{ x \in V_\RR(\I)~|~ g(x)<0\} =0 \text{  and  } \#\{x \in V_\RR(\I)~|~ g(x)=0\}=0,$$
since these terms are nonnegative integers by definition. This implies that there is no real solution to $\I$, when $g(x)<0$ and $g(x)=0$. By the definition of $g(x)=\|x-z^* \|_2^2-\varepsilon^2$, there is no $x\in \RR^n$ such that $\|x-z^* \|^2_2 \leq \varepsilon^2$. Thus we conclude that there is no real root within the closed ball of radius $\varepsilon$.\\
Assume that there is no real root to $\I$ within the closed ball in $\R^n$  of radius $\varepsilon$ around $z^*$. It implies that any $x \in V_\R(\I)$ satisfies the inequality $\|x-z^* \|_2^2 >\varepsilon^2$. Then 
$g(x)=\|x-z^* \|_2^2-\varepsilon^2$ is always positive, and $\#\{x \in V_\RR(\I)~|~ g(x)\leq 0\}=0$. By Theorem \ref{thm:mult_herm} and (\ref{eqn:H1I}), we have
$$
\sigma(H_1(\I)) = \sigma(H_g(\I))=\#\{x \in V_\RR(\I)~|~ g(x)> 0\}.
$$
\end{proof}
\noindent




\begin{algorithm}[Real Root Certification]\label{alg:root_cert}
$\;$
\begin{description}
	\item[Input:]
	 $\ff=(f_1, \ldots, f_m) \in \Q[x_1,\dots,x_n]^m$;
	 $z^* \in \Q[i]^n$;
	 $\varepsilon^2 \in \Q_+$;\\
	 $\B=\{x^{\alpha_1}, \ldots, x^{\alpha_k}\}$ connected to 1  with $|\B^+|=l$  for some $k,l\in \N$\\
	 $E, M\in\R_+$ and $z_1, \ldots, z_k\in \C^n$ such that $\|z_i\|_\infty \leq M-E$ for $i=1, \ldots, k$ and the accuracy of $z_i$ is at least $E$.
	 
	 \item[Output:] True: $\exists z\in V_\R(\I)$ such that $z$ is in the closed ball   of radius $\varepsilon$ around $z^*$\\
	 False: No  real root of $\I$  within the closed ball   of radius $\varepsilon$ around $z^*$\\
	 or Fail.

	\item[~~~~~~1:] Define $g(x):=\|x-z^* \|_2^2-\varepsilon^2\in \Q[x_1, \ldots, x_n]$
	
	\item[~~~~~~2:]  $H_1^+:=H_1^{\B^+}(z_1, \ldots,z_k)
	\leftarrow \text{\sc Hermite Matrix Computation}\left(\B, \B^+, E, M, \{z_1, \ldots, z_k\}\right)$\\ (see Algorithm \ref{alg:apprHerm})

	
	\item[~~~~~~3:] For $I:=\langle f_1, \ldots, f_m\rangle$ call
	$\text{\sc Hermite Matrix Certification}
	(f,g(x), B, H_1^+)$ to obtain certified $H_1(I)$ and $H_g(I)$, that algorithm can also return Fail (see Algorithm \ref{alg:cert})
	
	\item[~~~~~~4:] Compute $\sigma(H_1(\I))$ and $\sigma(H_g(\I))$. See Remark \ref{rmk:signature_compt} for computational details.
	\item[~~~~~~5:] {\bf If} $\sigma(H_1(\I)) = \sigma(H_g(\I))$ 
	{\bf then} return False\\ {\bf else} return True.

	\end{description}
\end{algorithm}

%

\end{document}